\documentclass[11pt,a4paper]{amsart}
\usepackage[T1]{fontenc}
\usepackage{lmodern}
\usepackage[margin=29mm,headheight=14pt,headsep=8mm]{geometry}
\usepackage{mathtools,amssymb}
\usepackage{microtype}
\usepackage{enumitem}
\usepackage[hidelinks,unicode]{hyperref}

\hypersetup{
  pdftitle={One-point metrizable coarsenings: gauges and local metric preservation},
  pdfauthor={First Author Name; Second Author Name},
  pdfsubject={General topology and metric spaces},
  pdfkeywords={coarser topology, metric, one-point extension, continuous gauge,
    local isometry, complete metrizability}
}

\newtheorem{theorem}{Theorem}[section]
\newtheorem{proposition}[theorem]{Proposition}
\newtheorem{lemma}[theorem]{Lemma}
\newtheorem{corollary}[theorem]{Corollary}
\theoremstyle{definition}
\newtheorem*{definition}{Definition}
\newtheorem{example}[theorem]{Example}
\theoremstyle{remark}
\newtheorem{remark}[theorem]{Remark}

\numberwithin{equation}{section}
\newcommand{\N}{\mathbb N}
\newcommand{\R}{\mathbb R}
\newcommand{\Ga}{\mathcal G_a(\tau)}
\newcommand{\La}{\mathcal L^m_a(\tau)}
\newcommand{\Bor}{\mathcal B}
\newcommand{\Cb}{C_b}
\newcommand{\ev}{\operatorname{ev}}
\newcommand{\id}{\operatorname{id}}
\newcommand{\clos}[1]{\overline{#1}}
\setlist[enumerate,1]{label=\textup{(\roman*)},leftmargin=2.3em,itemsep=3pt,topsep=5pt}
\title[One-point metrizable coarsenings]{One-point metrizable coarsenings:\\
gauges and local metric preservation}

\author{Ahmad ja'afari kalvan$^{*}$}
\address{Department of Mathematics, Tarbiat Modares University, 14115-134, Tehran, Iran}
\email{ahmad.jaafari@modares.ac.ir}
\thanks{$^{*}$Corresponding author}

\author{Ehsan Shahoseini$^{1}$}
\address{School of Mathematics, Institute for Research in Fundamental Sciences (IPM), P.O. Box: 19395-5746, Tehran, Iran}
\email{shahoseini@ipm.ir}
\thanks{$^{1}$ The second author's research was supported by a grant from IPM.}

\date{}
\subjclass[2020]{Primary 54A10; Secondary 54E35, 54E50, 54D30}
\keywords{Coarser topology, one-point extension, continuous gauge,
local isometry, complete metrizability, Borel structure}

\begin{document}

\begin{abstract}
Let $(X,\tau)$ be metrizable and let $a\in X$. We give a constructive
account of metrizable topologies $\sigma\subseteq\tau$ that agree with
$\tau$ on $X\setminus\{a\}$. Applying Hausdorff's classical metric
collapse construction, for every noncompact $(X,\tau)$ and every
compatible metric $d$ we obtain a strict coarsening with a metric
$p\le d$ that agrees with $d$ on a common neighborhood of each point
other than $a$. A prescribed countably infinite closed discrete set
$\{x_n:n\in\N\}\subseteq X\setminus\{a\}$ can be made to satisfy
$p(a,x_n)\le\lambda_n$ for any positive null sequence $(\lambda_n)$.
The resulting metric is greatest among the metrics dominated by $d$
that satisfy these bounds, and is complete whenever $d$ is complete.
We exhibit its realization as a classical metric quotient. We also
represent all localized metrizable coarsenings by continuous scalar
gauges using a standard cone metric. Inclusion is expressed by the
cofinal comparison of sublevel sets familiar from extension-trace
theory, while pointwise maximum and minimum realize finite joins and
meets. A closed-discrete criterion detects strictness. Standard
preservation results for Borel structure, complete metrizability, and
Polishness, together with function-space and local-field examples,
complete the account.
\end{abstract}

\maketitle

\section{Introduction}

Weakening a topology while retaining metrizability is a basic problem in
general topology. Compact Hausdorff topologies admit no strictly weaker
Hausdorff topology; this is part of the classical theory of minimal
Hausdorff spaces \cite{SW}. In the opposite direction, Gruenhage,
Tkachuk, and Wilson \cite{GTW} proved that every noncompact metrizable
space admits a weaker metrizable topology that is nowhere locally compact.
Here we consider weakenings whose effect is confined to one prescribed point,
and ask how much of a given metric can be retained elsewhere.

Throughout the paper, $X$ is a nonempty set, $\tau$ is a metrizable
topology on $X$, and $a\in X$. We write $B_d(x,r)$ for an open metric
ball and put $\N=\{1,2,\ldots\}$. A set is called \emph{closed discrete}
when it is closed in the ambient space and has the discrete subspace
topology.
Enumerations of countably infinite closed discrete sets are taken
without repetitions.

\begin{definition}
A \emph{one-point localized metrizable coarsening} of $\tau$ at $a$ is a
metrizable topology $\sigma$ on $X$ such that
\begin{equation}\label{eq:localization}
  \sigma\subseteq\tau,
  \qquad
  \sigma|_{X\setminus\{a\}}=\tau|_{X\setminus\{a\}}.
\end{equation}
The family of all such topologies, ordered by inclusion, is denoted by
$\La$. The coarsening is \emph{strict} if $\sigma\subsetneq\tau$.
\end{definition}

Since $\{a\}$ is closed in every topology under consideration, its
complement is open. Thus equality in \eqref{eq:localization} implies
equality of the ambient local topologies at each $x\ne a$.

There are direct precedents for this setting. Kuba \cite{Kuba} studied
the complete lattice of Hausdorff topologies on $\R$ coarser than the
Euclidean topology and agreeing with it on $\R\setminus\{0\}$, including
its metrizable members and chains. A recent expanded version
\cite{Kuba2026} incorporates this development and adds a 2026 supplement
on counting with respect to weight and related cardinal questions. The scope here is
different: instead of restricting the underlying space to $\R$, we work
with an arbitrary metrizable space $(X,\tau)$ and focus throughout on
metrizable one-point coarsenings. In particular, we give explicit
compatible metrics that preserve the original metric locally away from
the distinguished point, an extremal metric under prescribed distance
bounds, and a gauge representation that describes the inclusion order of
all localized metrizable coarsenings.

Moreover, if $Y=X\setminus\{a\}$ and $a$ is nonisolated in $\sigma$,
then $(X,\sigma)$ is a one-point metrizable extension of $Y$. Extension
traces and their cofinal order are described by Henriksen, Jano\v{s},
and Woods \cite[Theorems~3.3, 3.5, and~3.9]{HJW}; the trace metrization
theorem there is attributed to Alexander. For related work on one-point
metrizable extensions in the locally compact metrizable setting, see
Koushesh \cite[Section~1]{Koushesh}. The gauge description below gives an
explicit scalar realization of this framework subject to the
additional condition $\sigma\subseteq\tau$.

The metric construction has a classical source as well. Hausdorff
\cite[p.~354, formula~(0)]{Hausdorff} used the minimum of the original
distance and the sum of the distances to a closed set. Ishiki
\cite[Section~2.3, Propositions~2.6 and~2.9]{Ishiki} gives its metric
quotient formulation and proves preservation of completeness. We apply
this construction to a closed set in a product space so that the
quotient has underlying set $X$ and the change of topology is localized
at $a$.

The main contributions of the paper are fourfold. First, for every
noncompact metrizable space and every prescribed point, we obtain a
strict localized metrizable coarsening by an explicit metric dominated
by a given compatible metric; away from the distinguished point the two
metrics agree on a common neighborhood of each point. Second, the same
construction permits prescribed upper bounds on the distances from the
distinguished point to a closed discrete sequence, and the resulting
metric is maximal in pointwise order among all metrics satisfying those
bounds. Third, continuous scalar gauges give explicit compatible
metrics for all localized metrizable coarsenings and encode their
inclusion order, yielding concrete finite lattice operations. Finally,
we characterize strictness by closed-discrete convergence and record
the resulting preservation of Borel structure, complete metrizability,
and Polishness. The existence construction is an application of
Hausdorff's classical collapse method, whose quotient realization is
made explicit in Remark~\ref{rem:quotient}; the subsequent gauge,
order, and strictness results place the coarsening constraint in a
single constructive framework.

\begin{theorem}[Existence with local metric preservation]\label{thm:main-existence}
Let $(X,\tau)$ be noncompact and metrizable, let $a\in X$, and let $d$
be any compatible metric. There is a countably infinite $\tau$-closed
discrete set $D=\{x_n:n\in\N\}\subseteq X\setminus\{a\}$ such that,
for every sequence $\lambda_n>0$ with $\lambda_n\to0$, there is a metric
$p$ on $X$ satisfying:
\begin{enumerate}
\item $p\le d$, and its topology $\tau_p$ belongs to $\La$ with
      $\tau_p\subsetneq\tau$;
\item for each $x\ne a$, there is a neighborhood $U_x$ open in both
      topologies such that $p(y,z)=d(y,z)$ for all $y,z\in U_x$;
\item $p(a,x_n)\le\lambda_n$ for every $n$, so $x_n\to a$ in $\tau_p$;
\item if $d$ is complete, then $p$ is complete.
\end{enumerate}
The construction works for every countably infinite $\tau$-closed
discrete set $D\subseteq X\setminus\{a\}$ fixed in advance.
No boundedness assumption on $d$ is required.
\end{theorem}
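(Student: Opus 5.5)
Since $(X,\tau)$ is metrizable and noncompact, it is not countably compact. Hence it contains a countably infinite closed discrete set. Removing $a$ if necessary, we obtain $D=\{x_n\}\subseteq X\setminus\{a\}$, still closed discrete. The plan is Hausdorff's collapse applied to a closed set that links $a$ to $D$.

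\textbf{Choice of radii.} Fix any such $D$ and $\lambda_n\to0$; we may assume each $\lambda_n$ is small. Because $D\cup\{a\}$ is closed discrete, we can choose radii $r_n>0$ with $r_n\le\lambda_n$ such that $a\notin\clos{B_d(x_n,2r_n)}$. Shrinking further, we want the family $\{B_d(x_n,2r_n)\}$ to be discrete, with every point $x\neq a$ having a neighbourhood meeting at most one of these balls. Shrinking to $r_n\le\min(\lambda_n,\tfrac14 d(x_n,D\setminus\{x_n\}),\tfrac14 d(x_n,a))$ is not quite enough for local finiteness in general. So I use a $\tau$-discrete open expansion: take pairwise disjoint open $V_n\ni x_n$ forming a discrete family, which exists because closed discrete sets in metrizable, hence collectionwise normal, spaces have one. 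Then choose $r_n$ with $B_d(x_n,2r_n)\subseteq V_n$.

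\textbf{The metric.} Define the Hausdorff-type collapse
\[
p(x,y)=\min\Bigl\{d(x,y),\ \inf_{n}\bigl(\phi_n(x)+\lambda_n+\phi_n'(y)\bigr),\ \inf_n\bigl(\phi_n(y)+\lambda_n+\phi_n'(x)\bigr)\Bigr\},
\]
with $\phi_n(x)=d(x,a)$ and $\phi_n'(y)=d(y,x_n)$. Equivalently, $p$ is the quotient (chain) pseudometric obtained by adding edges of length $\lambda_n$ between $a$ and $x_n$:
\[
p(x,y)=\inf\sum_i \ell(z_i,z_{i+1}),
\]
where $\ell$ is either $d$ or a shortcut edge of length $\lambda_n$. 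I would take this chain formula as the definition. Then the triangle inequality, $p\le d$ and $p(a,x_n)\le\lambda_n$ are immediate, giving (iii) and the inequality in (i).

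\textbf{Local agreement.} For (ii) and the positivity of $p$, the key estimate is that any chain using a shortcut has length at least $\min(d(x,a),d(x,x_n))+\dots$. Near $x\ne a$, a shortcut saves length only if the chain passes close to both $a$ and some $x_n$. Take $U_x=B_d(x,\varepsilon)$ with $\varepsilon<\tfrac13 d(x,a)$, and also $\varepsilon<\tfrac13 d(x,x_n)$ for the finitely many $x_n$ with $\lambda_n$ large. For small $\lambda_n$ and $y,z\in U_x$, a chain $y\to a\to x_n\to z$ or $y\to x_n\to a\to z$ has length at least
\[
d(y,a)+d(z,x_n)\ \text{or}\ d(y,x_n)+d(z,a),
\]
and this is at least $2\varepsilon>d(y,z)$, using $d(z,a)\ge 2\varepsilon$. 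Repeated shortcuts only lengthen the chain. Hence $p=d$ on $U_x$.

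\textbf{Topology, strictness, completeness.} Item (ii) gives $\tau_p=\tau$ locally away from $a$. Since $p\le d$, $\tau_p\subseteq\tau$, so $\tau_p\in\La$. Because $x_n\to a$ in $\tau_p$ but not in $\tau$, the inclusion is strict. For (iv), take a $p$-Cauchy sequence. If it stays $p$-away from $a$, the local equality and a uniform version of the estimate above show it is eventually $d$-Cauchy. Otherwise it $p$-converges to $a$. The uniform estimate is that $p(y,z)<\delta$ small with $p(y,a)\ge\eta$ forces $p(y,z)=d(y,z)$; I expect this to be the main obstacle. Alternatively, invoke Ishiki's completeness result for Hausdorff quotients, applied to the closed set $\{a\}\times D\cup D\times\{a\}$ weighted by $\lambda_n$. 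Unboundedness of $d$ never enters.
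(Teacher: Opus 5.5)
Your proof of (ii) has a genuine gap. The claim ``Repeated shortcuts only lengthen the chain'' is false. Every shortcut has $a$ as an endpoint, so for $n\ne k$ the chain $y\to x_n\to a\to x_k\to z$ has length $d(y,x_n)+\lambda_n+\lambda_k+d(x_k,z)$ and contains no $d$-leg ending at $a$. Your lower bounds via $d(y,a)$ and $d(z,a)$ therefore do not apply to it. These chains are exactly what give $p(x_n,x_k)\le\lambda_n+\lambda_k$. They are also why your first displayed formula, which omits them, is not equivalent to the chain formula and is not a metric in general.

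Consequently your radius is wrong. It keeps $U_x$ away only from the finitely many $x_n$ with large $\lambda_n$, but the dangerous points are those with small $\lambda_n$. Take $X=\R$, $a=0$, $x_n=n$, $\lambda_n=1/n$; since $\lambda_n$ decreases, the excluded indices form an initial segment $1,\dots,N$. For $m\ge N+4$ your conditions at $x=x_m$ permit some $\varepsilon>1$, so $x_{m+1}\in U_x$. Yet $p(x_m,x_{m+1})\le\frac1m+\frac1{m+1}<1=d(x_m,x_{m+1})$. For $m\le N$, the condition $\varepsilon<\frac13 d(x_m,x_m)=0$ cannot even be met.

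Positivity of $p$ is also left unproved. What is needed is $p(x,a)\ge g(x)=\min\{d(x,a),\inf_n(\lambda_n+d(x,x_n))\}>0$. For $x\notin D$ this uses closedness of $D$; for $x=x_m$ it uses $\lambda_m>0$ together with discreteness. This positivity of $h$ is precisely the point the paper isolates.

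The repair uses closed discreteness at every point, not just at finitely many: choose $0<\varepsilon<\frac13\min\{d(x,a),d(x,D\setminus\{x\})\}$. After cutting loops, a chain using shortcuts visits $a$ exactly once, so it uses at most two shortcuts.
\begin{itemize}
\item With one shortcut, the chain contains a pure $d$-path from $y$ or $z$ to $a$, of length $>2\varepsilon>d(y,z)$.
\item With two shortcuts, through $x_n$ and $x_k$: if $n=k$, the length is at least $d(y,z)$ by the triangle inequality.
\item If $n\ne k$, one of $x_n,x_k$ lies at distance $>3\varepsilon$ from $x$, so the corresponding leg exceeds $2\varepsilon$.
\end{itemize}
The same loop-cutting shows your chain metric $c$ satisfies $c\ge\min\{d(x,y),g(x)+g(y)\}$. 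The argument of Proposition~\ref{prop:maximal-metric} gives the reverse inequality. So your metric is exactly the paper's $p$.

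The paper avoids chain combinatorics entirely. Once $h>0$ and $g$ is $1$-Lipschitz, Lemma~\ref{lem:minmetric} gives $p=d$ on $B_d(x,g(x)/2)=B_p(x,g(x)/2)$, and completeness, in a few lines. Your completeness sketch is sound, and its ``main obstacle'' is easy: a chain using a shortcut has length at least $p(y,a)$, so $p(y,z)<p(y,a)$ forces $p(y,z)=d(y,z)$.

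Finally, the radii $r_n$ and the sets $V_n$ are never used. The Ishiki alternative is misformulated: the set to collapse is $F=\{(a,0)\}\cup\{(x_n,\lambda_n):n\in\N\}\subseteq X\times[0,\infty)$, as in Remark~\ref{rem:quotient}.
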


The formula is
\begin{equation}\label{eq:main-formula}
\begin{split}
  h(x)&=\inf_{n\ge1}\{\lambda_n+d(x,x_n)\},
  \qquad g(x)=\min\{d(x,a),h(x)\},\\
  p(x,y)&=\min\{d(x,y),g(x)+g(y)\}.
\end{split}
\end{equation}
The positivity of $h$ and the $1$-Lipschitz property of $g$ are the key
points. Section~\ref{sec:localmetric} proves the theorem and explains
the quotient construction. Proposition~\ref{prop:maximal-metric}
characterizes $p$ as the greatest metric $r\le d$ satisfying
$r(a,x_n)\le\lambda_n$ for every $n$.

For the representation of all localized coarsenings, arbitrary continuous
gauges are more natural. Define
\[
  \Ga=\{s:X\to[0,\infty):s\text{ is $\tau$-continuous and }
                         s^{-1}(0)=\{a\}\}.
\]

\begin{theorem}[Gauge representation]\label{thm:main-gauges}
Fix a compatible metric $d\le1$ for $\tau$. For $s\in\Ga$, define
\begin{equation}\label{eq:cone-pullback}
  \rho_s(x,y)=|s(x)-s(y)|+\min\{s(x),s(y)\}d(x,y).
\end{equation}
Then $\rho_s$ is a metric whose topology $\tau_s$ lies in $\La$, and
$\{\{s<\varepsilon\}:\varepsilon>0\}$ is a neighborhood base at $a$.
Every member of $\La$ arises in this way. Moreover,
\begin{equation}\label{eq:main-order}
  \tau_s\subseteq\tau_t
  \quad\Longleftrightarrow\quad
  \forall\varepsilon>0\ \exists\delta>0:
          \{t<\delta\}\subseteq\{s<\varepsilon\}.
\end{equation}
The topology $\tau_s$ is independent of the choice of compatible
$d\le1$. If this $d$ is complete, then $\rho_s$ is complete.
\end{theorem}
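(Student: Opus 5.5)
The plan is to recognize $\rho_s$ as the pullback of the standard cone metric on $[0,\infty)\times X$ under $x\mapsto(s(x),x)$. All topological claims will be read off from two facts: $\rho_s(a,y)=s(y)$, and $\rho_s$ is comparable with $d$ wherever $s$ is bounded away from $0$.

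\emph{Metric axioms.} Symmetry is clear. If $\rho_s(x,y)=0$, then $s(x)=s(y)$, and either this common value is $0$ (so $x=y=a$) or $d(x,y)=0$. For the triangle inequality, write $\alpha=s(x)$, $\beta=s(y)$, $\gamma=s(z)$, and assume by symmetry $\alpha\le\gamma$. There are two cases.
\begin{itemize}
\item If $\beta\ge\alpha$, then $\min\{\alpha,\beta\}=\alpha\le\min\{\beta,\gamma\}$. Hence $\alpha d(x,z)\le \min\{\alpha,\beta\}d(x,y)+\min\{\beta,\gamma\}d(y,z)$, and $|\alpha-\gamma|\le|\alpha-\beta|+|\beta-\gamma|$ finishes this case.
\item If $\beta<\alpha$, use $d\le1$. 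We have $(\gamma-\alpha)+\alpha d(x,z)\le(\gamma-\alpha)+(\alpha-\beta)+\beta\bigl(d(x,y)+d(y,z)\bigr)$. This is at most $(\alpha-\beta)+(\gamma-\beta)+\beta d(x,y)+\beta d(y,z)=\rho_s(x,y)+\rho_s(y,z)$.
\end{itemize}
This case split, and the place where the hypothesis $d\le1$ is needed, is the main obstacle; everything else is soft.

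\emph{Topology.} Since $s$ and $d$ are $\tau$-continuous, $\rho_s$ is $\tau\times\tau$-continuous, so every $\rho_s$-ball is $\tau$-open and $\tau_s\subseteq\tau$. At $a$ we have $\rho_s(a,y)=s(y)$, so the balls at $a$ are exactly the sets $\{s<\varepsilon\}$, which gives the neighborhood base. Now fix $x\ne a$ and put $c=s(x)>0$. If $\rho_s(x,y)<c/2$, then $s(y)>c/2$, and so $d(x,y)\le 2\rho_s(x,y)/c$. Conversely, $\rho_s(x,y)\to0$ as $y\to x$ in $\tau$. Hence $\tau_s$ and $\tau$ have the same neighborhoods at every $x\ne a$. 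Since $X\setminus\{a\}$ is open in both topologies, this gives $\tau_s|_{X\setminus\{a\}}=\tau|_{X\setminus\{a\}}$, so $\tau_s\in\La$.

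\emph{Key observation.} The argument above shows that a topology $\sigma$ in $\La$ is determined by its neighborhood filter at $a$. Indeed, a set is open iff it is a neighborhood of each of its points, and neighborhoods at points $x\ne a$ coincide with those of $\tau$. Three consequences follow.
\begin{itemize}
\item \emph{Surjectivity.} Given $\sigma\in\La$ with compatible metric $e$, put $s(x)=e(a,x)$. This $s$ is $\sigma$-continuous, hence $\tau$-continuous, and $s^{-1}(0)=\{a\}$. Its sublevel sets are the $e$-balls at $a$, so $\tau_s$ and $\sigma$ have the same neighborhood filter at $a$, and therefore $\tau_s=\sigma$.
\item \emph{Order \eqref{eq:main-order}.} $\tau_s\subseteq\tau_t$ iff every $\tau_s$-neighborhood of $a$ is a $\tau_t$-neighborhood of $a$. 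Using the two sublevel bases, this is exactly the cofinality condition.
\item \emph{Independence of $d$.} The neighborhood base $\{s<\varepsilon\}$ does not involve $d$, so $\tau_s$ does not depend on the choice of $d\le1$.
\end{itemize}

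\emph{Completeness.} Let $(x_n)$ be $\rho_s$-Cauchy. Since $|s(x_n)-s(x_m)|\le\rho_s(x_n,x_m)$, the values $s(x_n)$ converge to some $c$. If $c=0$, then $\rho_s(a,x_n)=s(x_n)\to0$, so $x_n\to a$. If $c>0$, then eventually $s(x_n)\ge c/2$, and so $d(x_n,x_m)\le 2\rho_s(x_n,x_m)/c$. Thus $(x_n)$ is $d$-Cauchy and $d$-converges to some $x$. By continuity $s(x)=c>0$, so $x\ne a$, and $\tau_s=\tau$ near $x$ gives $\rho_s(x_n,x)\to0$.
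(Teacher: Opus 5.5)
Your proof is correct and follows essentially the same route as the paper. You use the same case split for the cone triangle inequality (your case $\beta<\alpha$ is the paper's case $t<\min\{r,q\}$, with a slightly sharper use of $d\le1$), the same local lower bound $\rho_s(x,y)\ge\tfrac{s(x)}{2}d(x,y)$, the same representing gauge $e(a,\cdot)$, and the same completeness argument, while your observation that members of $\La$ are determined by their neighborhood filter at $a$ plays the role of the open-set description in Proposition~\ref{prop:gauge-localization}.
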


The cone formula in \eqref{eq:cone-pullback} is standard; the identical
formula on a bounded height interval is used in
\cite[Section~4]{BZ}. Our use of it records an explicit metric for every
admissible gauge. The order criterion is the cofinal comparison of local
bases familiar from extension theory. Section~\ref{sec:gauges} proves
the representation and identifies finite joins and meets. Section~\ref{sec:structure}
explains the extension-trace correspondence and gives a closed-discrete
criterion for strictness. It also records the preservation results;
complete metrizability is an application of
\cite[Theorem~2.2]{HJW}, while \eqref{eq:cone-pullback} provides an
explicit complete compatible metric. Section~\ref{sec:examples} contains
the function-space consequence and the local-field example.

\section{A coarser metric with local equality of distances}\label{sec:localmetric}

\begin{lemma}\label{lem:discrete}
If $(X,\tau)$ is noncompact and metrizable, then for every $a\in X$
there is a countably infinite $\tau$-closed discrete subset of
$X\setminus\{a\}$.
\end{lemma}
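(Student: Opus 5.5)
Since $(X,\tau)$ is metrizable but not compact, it is not sequentially compact, because compactness and sequential compactness coincide for metrizable spaces. So we fix a compatible metric $d$ and a sequence $(y_k)$ in $X$ with no $\tau$-convergent subsequence. The plan is to extract from $(y_k)$ an infinite set of distinct points avoiding $a$ and to check that this set is closed and discrete.

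The first step is to get infinitely many distinct values. If some value $y$ occurred for infinitely many $k$, the constant subsequence would converge to $y$, which is impossible. Hence every value occurs only finitely often, and the range $R=\{y_k:k\in\N\}$ is infinite. Removing $a$ from it, we choose an injective enumeration $x_1,x_2,\dots$ of an infinite subset of $R\setminus\{a\}$, with each $x_n=y_{k_n}$ and $k_1<k_2<\cdots$. This is possible because each value is repeated only finitely often, so the indices can be taken increasing. Then $(x_n)$ is a subsequence of $(y_k)$ and has no convergent subsequence either.

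Next we put $D=\{x_n:n\in\N\}$ and show $D$ is closed discrete. The key step is to show that no point $z\in X$ is a $\tau$-accumulation point of $D$. Suppose $z$ were one. Every ball $B_d(z,1/j)$ then contains infinitely many points of $D$, because in a metric ($T_1$) space an accumulation point has infinitely many points of the set in each neighborhood. We could then choose indices $n_1<n_2<\cdots$ with $x_{n_j}\in B_d(z,1/j)$. This gives a subsequence of $(x_n)$ converging to $z$, which is a contradiction.

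Finally, since $D$ has no accumulation points, $D$ is closed. Each $x_n$ is also not an accumulation point of $D$, so it has a neighborhood meeting $D$ only in $x_n$, and the subspace topology on $D$ is discrete. By construction $D$ is countably infinite and contained in $X\setminus\{a\}$. The only delicate point in the argument is the bookkeeping in the first step, namely keeping increasing indices while removing repetitions and $a$. An alternative would be to use that a metrizable space is compact iff it is countably compact, but the sequential argument above is self-contained.
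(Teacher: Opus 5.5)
Your proof is correct and follows essentially the same route as the paper: take a sequence with no convergent subsequence (non-sequential compactness), and show its range of distinct values has no accumulation point by extracting a convergent subsequence from balls $B_d(z,1/j)$. The only difference is bookkeeping: you discard $a$ before extracting distinct terms, whereas the paper first shows the whole range $E$ is closed discrete and then removes $a$ via $E\setminus\{a\}=E\cap(X\setminus V)$ for an open $V$ with $V\cap E=\{a\}$.
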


\begin{proof}
In a metrizable space, compactness is equivalent to sequential
compactness. Choose a sequence with no convergent subsequence. Its range
$E$ is infinite and has no accumulation point: otherwise first countability
would allow the choice of distinct terms, with increasing indices,
converging to an accumulation point. Thus $E$ is countably infinite,
closed, and discrete. If $a\notin E$, take $D=E$. If $a\in E$, choose
an open set $V$ with $V\cap E=\{a\}$. Then
$D=E\setminus\{a\}=E\cap(X\setminus V)$ is again closed discrete and
countably infinite.
\end{proof}

The next lemma is a gauge form of Hausdorff's construction
\cite[p.~354, formula~(0)]{Hausdorff}. For the metric quotient
formulation and its completeness, see
\cite[Propositions~2.6 and~2.9]{Ishiki}. We include the proof to record
explicitly a common ball on which the original distances are retained.

\begin{lemma}[A gauge form of Hausdorff's construction]\label{lem:minmetric}
Let $d$ be a metric on $X$ and let $g:X\to[0,\infty)$ be
$1$-Lipschitz with $g^{-1}(0)=\{a\}$. Put
\begin{equation}\label{eq:minmetric}
  p_g(x,y)=\min\{d(x,y),g(x)+g(y)\}.
\end{equation}
Then $p_g$ is a metric, $p_g\le d$, and $p_g(a,x)=g(x)$. For $x\ne a$,
the set
\begin{equation}\label{eq:equal-ball}
  U_x=B_d\bigl(x,g(x)/2\bigr)=B_{p_g}\bigl(x,g(x)/2\bigr)
\end{equation}
is open in both metric topologies, and
\begin{equation}\label{eq:local-isometry}
  p_g(y,z)=d(y,z)\qquad(y,z\in U_x).
\end{equation}
Consequently, $\tau_{p_g}\in\mathcal L^m_a(\tau_d)$. More precisely,
a set $U\subseteq X$ is $\tau_{p_g}$-open if and only if it is
$\tau_d$-open and either $a\notin U$, or $a\in U$ and
$\{g<\varepsilon\}\subseteq U$ for some $\varepsilon>0$.
If $d$ is complete, then $p_g$ is complete.
\end{lemma}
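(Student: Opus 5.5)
We verify the metric axioms for $p_g=\min\{d,\,g(x)+g(y)\}$ first, then establish the local equality of distances, then the description of open sets, and finally completeness.

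\emph{Metric axioms.} Symmetry and $p_g\le d$ are immediate. If $p_g(x,y)=0$, then either $d(x,y)=0$, or $g(x)=g(y)=0$, which forces $x=y=a$. The identity $p_g(a,x)=g(x)$ comes from the $1$-Lipschitz bound $g(x)=g(x)-g(a)\le d(a,x)$. For the triangle inequality $p_g(x,z)\le p_g(x,y)+p_g(y,z)$, split into four cases according to which term attains each minimum on the right.
\begin{itemize}
\item[] If both minima are $d$, use the triangle inequality for $d$.
\item[] If both minima are gauge sums, then $g(x)+2g(y)+g(z)\ge g(x)+g(z)$.
\item[] In the mixed case $d(x,y)+g(y)+g(z)$, the Lipschitz property gives $d(x,y)+g(y)\ge g(x)$, so the sum is at least $g(x)+g(z)$. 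The other mixed case is symmetric.
\end{itemize}

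\emph{Local equality.} Fix $x\neq a$ and put $r=g(x)/2$. For $y$ with $d(x,y)<r$, the Lipschitz bound gives $g(y)>g(x)-r=r$. Hence for $y,z\in B_d(x,r)$ we get $g(y)+g(z)>2r$. Also $d(y,z)<2r$ by the triangle inequality. So the minimum is $d(y,z)$, which proves \eqref{eq:local-isometry}.

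For the equality of balls \eqref{eq:equal-ball}, note that $p_g(x,y)<r$ means either $d(x,y)<r$, or $g(x)+g(y)<r$. The second option is impossible, since $g(x)=2r$. Hence the two balls coincide. The same argument with smaller radii shows that small $d$-balls and small $p_g$-balls agree around every $x\neq a$. Therefore the two topologies coincide on the $d$-open set $X\setminus\{a\}$ and have the same neighbourhoods at each point $x\neq a$.

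\emph{Open sets.} At $a$ we have $B_{p_g}(a,\varepsilon)=\{g<\varepsilon\}$, because $p_g(a,y)=g(y)$. So a set $U$ is $p_g$-open exactly when it is open at points other than $a$ (which is the same for both topologies) and, if $a\in U$, it contains some $\{g<\varepsilon\}$. The set $\{g<\varepsilon\}$ is $d$-open because $g$ is continuous, and it contains $B_d(a,\varepsilon)$. Hence $\tau_{p_g}\subseteq\tau_d$, and together with the agreement away from $a$ this gives the stated characterization and $\tau_{p_g}\in\mathcal L^m_a(\tau_d)$.

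\emph{Completeness.} This is the step needing the most care. Let $(x_k)$ be $p_g$-Cauchy, and consider two cases.
\begin{itemize}
\item[] If $\liminf g(x_k)=0$, pass to a subsequence with $g(x_{k_j})\to0$. That subsequence converges to $a$ in $p_g$, since $p_g(a,x_{k_j})=g(x_{k_j})$, and a Cauchy sequence with a convergent subsequence converges.
\item[] Otherwise $g(x_k)\ge c>0$ for all large $k$. Choose $N$ with $p_g(x_k,x_l)<c$ for $k,l\ge N$. Since $g(x_k)+g(x_l)\ge 2c>c$, the minimum cannot be the gauge sum, so $p_g(x_k,x_l)=d(x_k,x_l)$ for $k,l\ge N$. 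Thus the sequence is $d$-Cauchy and $d$-converges to some $x$. Since $p_g\le d$, it also $p_g$-converges to $x$.
\end{itemize}
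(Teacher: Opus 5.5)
Your proof is correct and follows the paper's argument essentially step for step: the same four-term bound for the triangle inequality, the same Lipschitz estimates for the common ball and the local isometry, and the same threshold argument for completeness. The only difference is in the completeness step. You split on whether $\liminf g(x_k)$ is zero and use a convergent subsequence, whereas the paper first notes $|g(x)-g(y)|\le p_g(x,y)$, so that $g(y_k)$ converges outright.
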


\begin{proof}
Symmetry and the identity of indiscernibles are immediate. To prove
the triangle inequality, fix $x,y,z\in X$. The quantity $p_g(x,z)$ is
at most each of the following four numbers:
\[
\begin{aligned}
  &d(x,y)+d(y,z),
  &&d(x,y)+g(y)+g(z),\\
  &g(x)+g(y)+d(y,z),
  &&g(x)+2g(y)+g(z).
\end{aligned}
\]
The first bound uses the triangle inequality for $d$. The middle two
use $g(x)\le d(x,y)+g(y)$ and $g(z)\le g(y)+d(y,z)$, respectively.
The last uses nonnegativity. The minimum of these four numbers is
$p_g(x,y)+p_g(y,z)$, proving the required inequality.

Since $g(a)=0$ and $g$ is $1$-Lipschitz, $g(x)\le d(a,x)$; hence
$p_g(a,x)=g(x)$. The inequality $p_g\le d$ makes
$\id:(X,d)\to(X,p_g)$ continuous. If $x\ne a$, then
$g(x)+g(y)\ge g(x)>g(x)/2$ for every $y$, which proves
\eqref{eq:equal-ball}. For $y,z\in U_x$, the Lipschitz inequality gives
$g(y)>g(x)/2$ and $g(z)>g(x)/2$, whereas $d(y,z)<g(x)$. Thus the
minimum in \eqref{eq:minmetric} equals $d(y,z)$, proving
\eqref{eq:local-isometry}. This also proves equality of the two
topologies away from $a$. The description of open sets follows from
$B_{p_g}(a,\varepsilon)=\{g<\varepsilon\}$ and the fact that
$X\setminus\{a\}$ is open in both topologies.

Suppose finally that $d$ is complete, and let $(y_k)$ be $p_g$-Cauchy.
For all $x,y$,
\[
  |g(x)-g(y)|\le\min\{d(x,y),g(x)+g(y)\}=p_g(x,y).
\]
Thus $g(y_k)\to L$ for some $L\ge0$. If $L=0$, then
$p_g(a,y_k)=g(y_k)\to0$. If $L>0$, then eventually $g(y_k)\ge L/2$.
Given $\varepsilon>0$, for sufficiently large $k,m$ we also have
$p_g(y_k,y_m)<\min\{L,\varepsilon\}$. Since
$g(y_k)+g(y_m)\ge L$, it follows that
$d(y_k,y_m)=p_g(y_k,y_m)<\varepsilon$. Thus $(y_k)$ is $d$-Cauchy
and converges to some $y\in X$. Since $p_g\le d$, it converges to $y$
in $p_g$ as well.
\end{proof}

\begin{proof}[Proof of Theorem~\ref{thm:main-existence}]
Such sets $D$ exist by Lemma~\ref{lem:discrete}. Fix any countably
infinite closed discrete set $D=\{x_n:n\in\N\}\subseteq X\setminus\{a\}$,
and let $(\lambda_n)$ be the prescribed positive null sequence. Define $h,g,p$
by \eqref{eq:main-formula}. Each function
$x\mapsto\lambda_n+d(x,x_n)$ is $1$-Lipschitz. Their infimum is finite
and nonnegative, and satisfies
\[
  h(x)\le h(y)+d(x,y),
\]
so $h$ is $1$-Lipschitz.

If $x\notin D$, then $h(x)\ge d(x,D)>0$ because $D$ is closed. If
$x=x_m$, choose $\eta_m>0$ with $d(x_m,x_n)\ge\eta_m$ for $n\ne m$.
Then
\[
  h(x_m)\ge\min\{\lambda_m,\eta_m\}>0.
\]
Thus $h$ is positive everywhere, while $h(x_n)\le\lambda_n$.

The minimum of two real-valued $1$-Lipschitz functions is
$1$-Lipschitz. Therefore $g=\min\{d(\cdot,a),h\}$ is
$1$-Lipschitz, $g^{-1}(0)=\{a\}$, and $g(x_n)\le\lambda_n$.
Lemma~\ref{lem:minmetric} proves all the metric, localization, and
completeness assertions. Finally, $p(a,x_n)\to0$, while the
$\tau$-open neighborhood $X\setminus D$ of $a$ contains no $x_n$.
Thus $(x_n)$ does not converge to $a$ in $\tau$, and
$\tau_p\subsetneq\tau$.
\end{proof}

\begin{proposition}[The greatest metric under the distance bounds]
\label{prop:maximal-metric}
With the data of Theorem~\ref{thm:main-existence}, the metric $p$ in
\eqref{eq:main-formula} is the greatest, in pointwise order, among
all metrics $r$ on $X$ satisfying
\[
  r\le d,\qquad r(a,x_n)\le\lambda_n\quad(n\in\N).
\]
\end{proposition}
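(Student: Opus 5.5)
The argument has two parts: first check that $p$ itself is admissible, then show that it dominates every other admissible metric.

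\emph{Admissibility of $p$.} Lemma~\ref{lem:minmetric} already shows that $p$ is a metric with $p\le d$. It also gives $p(a,x)=g(x)$ for every $x$. In the proof of Theorem~\ref{thm:main-existence} we saw that $g(x_n)\le h(x_n)\le\lambda_n$. Hence $p(a,x_n)\le\lambda_n$, and $p$ belongs to the class under consideration.

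\emph{Maximality.} Let $r$ be any metric with $r\le d$ and $r(a,x_n)\le\lambda_n$ for all $n$. We show $r\le p$ pointwise, in three steps.

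First, we bound $r(a,x)$ by $g(x)$. For each $n$, the triangle inequality for $r$ together with $r\le d$ gives
\[
  r(a,x)\le r(a,x_n)+r(x_n,x)\le\lambda_n+d(x,x_n).
\]
Taking the infimum over $n$ yields $r(a,x)\le h(x)$. Directly, $r(a,x)\le d(a,x)$. Combining the two bounds, $r(a,x)\le\min\{d(a,x),h(x)\}=g(x)$.

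Second, for arbitrary $x,y$ we route through $a$:
\[
  r(x,y)\le r(x,a)+r(a,y)\le g(x)+g(y).
\]
Together with $r(x,y)\le d(x,y)$, this gives $r(x,y)\le\min\{d(x,y),g(x)+g(y)\}=p(x,y)$.

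Third, since $p$ itself lies in the class, $p$ is the greatest element of the class, not merely an upper bound for it.

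No step is a serious obstacle. The only point needing care is the first step: $h$ is defined as an infimum, so the bound must be established for each fixed $n$ before passing to the infimum. This works without any closedness or discreteness of $D$. Those hypotheses are needed only elsewhere, to ensure $p$ is a genuine metric, which Lemma~\ref{lem:minmetric} already supplies via the positivity of $h$.
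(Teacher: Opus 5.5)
Your proposal is correct and matches the paper's proof: you bound $r(a,x)$ by $g(x)$ using the triangle inequality through each $x_n$ and then through $a$, and you take admissibility of $p$ from Lemma~\ref{lem:minmetric} and the proof of Theorem~\ref{thm:main-existence}. Your remark that the maximality step needs no closedness or discreteness of $D$ is also accurate.
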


\begin{proof}
Let $r$ satisfy these conditions. For every $x\in X$ and $n\in\N$,
\[
  r(a,x)\le d(a,x),\qquad
  r(a,x)\le r(a,x_n)+r(x_n,x)\le\lambda_n+d(x_n,x).
\]
Taking the infimum over $n$ gives $r(a,x)\le g(x)$. Therefore
\[
  r(x,y)\le\min\{d(x,y),r(x,a)+r(a,y)\}
           \le\min\{d(x,y),g(x)+g(y)\}=p(x,y).
\]
Theorem~\ref{thm:main-existence} shows that $p$ itself satisfies all
the stated bounds.
\end{proof}

\begin{remark}[Realization as a classical metric quotient]\label{rem:quotient}
The connection with Hausdorff's construction can be made exact.
Equip $X\times[0,\infty)$ with
\[
  m\bigl((x,r),(y,t)\bigr)=d(x,y)+|r-t|,
\]
and set
\[
  F=\{(a,0)\}\cup\{(x_n,\lambda_n):n\in\N\},
  \qquad Z=(X\times\{0\})\cup F.
\]
The set $D\cup\{a\}$ is closed discrete in $X$. The set $F$ is the
graph of a continuous function on this closed subspace, so $F$ is
closed in $X\times[0,\infty)$; hence $Z$ is closed as well. Also
$F\cap(X\times\{0\})=\{(a,0)\}$.

On the set $Z/F$ obtained by identifying $F$ to a single point, use
the classical metric quotient formula
\cite[p.~354]{Hausdorff}, \cite[Proposition~2.6]{Ishiki}:
\[
  Q([u],[v])=\min\{m(u,v),m(u,F)+m(v,F)\}\qquad(u,v\in Z).
\]
If one representative lies in $F$, the formula reduces to the distance
of the other representative from $F$, so it is well defined.
The map $x\mapsto[(x,0)]$ is a bijection from $X$ onto $Z/F$, and
\[
\begin{split}
  m((x,0),F)
  &=\min\left\{d(x,a),
           \inf_{n\in\N}\bigl(d(x,x_n)+\lambda_n\bigr)\right\}\\
  &=g(x).
\end{split}
\]
Thus pulling back $Q$ gives exactly $p$ in \eqref{eq:main-formula}.
Here $Z/F$ carries the displayed metric; agreement with the usual
topological quotient topology is not assumed.

If $d$ is complete, then $Z$ is complete as a closed subspace of the
complete product. The completeness of $p$ therefore also follows from
\cite[Proposition~2.9]{Ishiki}. For a general gauge $g$ as in
Lemma~\ref{lem:minmetric}, the analogous realization uses
$F_g=\{(x,g(x)):x\in X\}$ and
$Z_g=(X\times\{0\})\cup F_g$: the $1$-Lipschitz property gives
\[
  m((x,0),F_g)=\inf_{y\in X}\{d(x,y)+g(y)\}=g(x).
\]
These realizations identify both the lemma and its application with the
classical construction. Proposition~\ref{prop:maximal-metric} records
the elementary extremal property of the specified choice of $g$.
\end{remark}

\begin{remark}[Meaning of the rate bound]\label{rem:rate}
The numbers $\lambda_n$ prescribe an upper bound relative to the
constructed metric $p$, which itself depends on $(\lambda_n)$. No
intrinsic topological rate or lower bound is asserted. The construction
achieves this bound together with $p\le d$ and exact local preservation
of the given metric away from $a$.
\end{remark}

\section{Continuous gauges and the order of coarsenings}\label{sec:gauges}

Throughout this section, $d$ is a compatible metric for $\tau$ satisfying
$d\le1$. Such a metric is always available: replace any compatible
metric $d_0$ by $d_0/(1+d_0)$. This replacement preserves completeness
when $d_0$ is complete, since the two metrics have the same Cauchy
sequences. The normalization is part of the hypotheses for the cone
formula; mere boundedness, without a diameter restriction, is not enough.

\subsection{Metric realization}

Let $C(X)$ denote the quotient \emph{set}
\[
  C(X)=(X\times[0,\infty))/(X\times\{0\}),
\]
and write $[x,r]$ for the class of $(x,r)$. All classes with $r=0$
are the same vertex.

\begin{lemma}[Standard cone metric]\label{lem:cone}
The formula
\begin{equation}\label{eq:cone}
  \delta([x,r],[y,t])=|r-t|+\min\{r,t\}d(x,y)
\end{equation}
defines a metric on $C(X)$.
\end{lemma}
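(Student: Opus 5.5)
The plan has three parts: check well-definedness on the quotient set, dispose of symmetry and the identity of indiscernibles quickly, and spend the real effort on the triangle inequality, where $d\le1$ is essential.

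\emph{Well-definedness.} If $r=0$, the formula gives $\delta([x,0],[y,t])=t+0=t$ for every choice of representative $x$, so the value does not depend on it. The same holds symmetrically when $t=0$.

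\emph{Symmetry and separation.} Symmetry is immediate. For the identity of indiscernibles, suppose $\delta([x,r],[y,t])=0$. Then $r=t$ and $r\,d(x,y)=0$. So either $r=t=0$, and both classes are the vertex, or $d(x,y)=0$, so $x=y$.

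\emph{Triangle inequality.} Take three points $u=[x,r]$, $v=[y,s]$, $w=[z,t]$; we must show $\delta(u,w)\le\delta(u,v)+\delta(v,w)$. By symmetry in $u,w$, assume $r\le t$, so that $\delta(u,w)=(t-r)+r\,d(x,z)$. Vertex cases, where one of $r,s,t$ is $0$, reduce to $|r-t|\le|r-s|+|s-t|$. For example, if $s=0$ the right side is $r+t\ge t-r+r\cdot1\ge\delta(u,w)$, using $d\le1$. We then split according to the position of $s$.

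\emph{Case $s\ge r$.} Here $\min\{r,s\}=r$ and $\min\{s,t\}\ge r$, so the right side is at least
\[
|r-s|+|s-t|+r\,d(x,y)+r\,d(y,z)\ge (t-r)+r\,d(x,z).
\]
This follows from the triangle inequalities for $|\cdot|$ and for $d$.

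\emph{Case $s<r$.} This is the main obstacle. The right side is
\[
(r-s)+(t-s)+s\,d(x,y)+s\,d(y,z)\ge (t-r)+2(r-s)+s\,d(x,z).
\]
It remains to show that this is at least $(t-r)+r\,d(x,z)$. That inequality reduces to $2(r-s)\ge (r-s)\,d(x,z)$, which holds because $d\le1\le2$. This is exactly where the normalization $d\le1$ is used; without it, a large $d$ could break the inequality. In writing it up I would take care that the case analysis is exhaustive, including the equality cases and the cases where $r$ or $t$ is $0$, which fold into the computations above.
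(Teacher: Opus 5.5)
Your proof is correct and is essentially the paper's argument: you use the same split according to whether the middle height is at least the smaller outer height, and in the hard case the surplus $2(r-s)$ in the height terms absorbs the loss in the distance term, using $d\le 1$. Because you apply the triangle inequality for $d$ only to the $s$-weighted part, your estimate actually needs just $d(x,z)\le 2$; this is the sharp normalization, as the choice of $v$ at the vertex with $u,w$ at equal height shows.
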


\begin{proof}
The formula is well defined at the vertex, and symmetry and separation
are immediate. For the triangle inequality, consider $[x,r]$, $[y,t]$,
and $[z,q]$, and put $m=\min\{r,q\}$. If $t\ge m$, then
$\min\{r,t\}\ge m$ and $\min\{t,q\}\ge m$; the triangle inequalities
for $d$ and the absolute value give the result.

If $t<m$, then
\[
\begin{split}
  m d(x,z)
  &\le m\bigl(d(x,y)+d(y,z)\bigr)\\
  &\le t\bigl(d(x,y)+d(y,z)\bigr)+2(m-t),
\end{split}
\]
because $d\le1$. Since
\[
  |r-t|+|t-q|-|r-q|=2(m-t),
\]
the triangle inequality follows in this case as well.
\end{proof}

Formula \eqref{eq:cone} is the standard construction noted in
\cite[Section~4]{BZ}; the proof is included to specify the normalization
and to keep the metric arguments self-contained.

\begin{proposition}[Gauge localization]\label{prop:gauge-localization}
For $s\in\Ga$, formula \eqref{eq:cone-pullback} defines a metric
$\rho_s$. Its topology $\tau_s$ satisfies
\[
  \tau_s\subseteq\tau,
  \qquad \tau_s|_{X\setminus\{a\}}=\tau|_{X\setminus\{a\}},
\]
and
\begin{equation}\label{eq:sublevels}
  B_{\rho_s}(a,\varepsilon)=\{s<\varepsilon\}.
\end{equation}
A set $U\subseteq X$ belongs to $\tau_s$ if and only if $U\in\tau$
and either $a\notin U$, or $a\in U$ and
$\{s<\varepsilon\}\subseteq U$ for some $\varepsilon>0$.
In particular, $\tau_s$ is independent of the compatible metric $d\le1$
used in its construction.
\end{proposition}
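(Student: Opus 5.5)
The metric property comes for free: $\rho_s$ is the pullback of the cone metric $\delta$ of Lemma~\ref{lem:cone} under $\Phi(x)=[x,s(x)]$, i.e.\ $\rho_s(x,y)=\delta(\Phi(x),\Phi(y))$. So symmetry and the triangle inequality are inherited. For separation, note that $\Phi$ is injective: classes with $r>0$ are singletons in $C(X)$, and the vertex is hit only by $a$ because $s^{-1}(0)=\{a\}$. Hence $\rho_s$ is a metric. The identity \eqref{eq:sublevels} is immediate from $s(a)=0$, since then $\rho_s(a,x)=|0-s(x)|+0=s(x)$.

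Next I will show $\tau_s\subseteq\tau$, i.e.\ that $\rho_s$ is $\tau$-continuous on $X\times X$. This holds because $s$ and $d$ are $\tau$-continuous, and the formula \eqref{eq:cone-pullback} is built from them by continuous operations. Therefore every $\rho_s$-ball is $\tau$-open.

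For the reverse inclusion at a point $x\ne a$, put $c=s(x)>0$ and fix $\varepsilon>0$. I would take $\rho_s(x,y)<\eta$ with $\eta<c/2$. This gives $|s(y)-c|<\eta$, so $\min\{s(x),s(y)\}>c/2$, and hence
\[
d(x,y)<\frac{2\eta}{c}.
\]
Choosing $\eta<\min\{c/2,\,c\varepsilon/2\}$ yields $B_{\rho_s}(x,\eta)\subseteq B_d(x,\varepsilon)$. Thus $\tau$ and $\tau_s$ have the same neighborhood filters at every $x\ne a$. Since $X\setminus\{a\}$ is open in both topologies (both are Hausdorff), the two subspace topologies on $X\setminus\{a\}$ coincide.

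The description of open sets then follows. Suppose $U$ is $\tau_s$-open. Then $U\in\tau$, and if $a\in U$, some ball $B_{\rho_s}(a,\varepsilon)=\{s<\varepsilon\}$ is contained in $U$. Conversely, suppose $U\in\tau$ and satisfies the sublevel condition at $a$. Each $x\in U\setminus\{a\}$ has a $\tau_s$-neighborhood inside $U$, by the local agreement just proved. At $a$, the sublevel set $\{s<\varepsilon\}$ is itself a $\rho_s$-ball. So $U\in\tau_s$.

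This characterization mentions only $\tau$ and $s$, never $d$. Hence $\tau_s$ is independent of the compatible metric $d\le1$.

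The only delicate step is the local comparison at $x\ne a$. It uses the positivity of $s(x)$ together with the $|s(x)-s(y)|$ term to keep $\min\{s(x),s(y)\}$ bounded below. I expect nothing else to need care.
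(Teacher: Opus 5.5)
Your proof is correct and follows the paper's argument essentially step by step. You use the pullback of the cone metric under the injective map $x\mapsto[x,s(x)]$, the identity $\rho_s(a,x)=s(x)$, and the local lower bound $\rho_s(x,y)\ge\frac{s(x)}{2}\,d(x,y)$ once $\rho_s(x,y)<s(x)/2$, from which the open-set description and the independence of $d$ follow; the only cosmetic difference is that you get $\tau_s\subseteq\tau$ from joint $\tau$-continuity of $\rho_s$ instead of the paper's explicit estimate $\rho_s(x,y)\le|s(x)-s(y)|+s(x)d(x,y)$.
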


\begin{proof}
The map $x\mapsto[x,s(x)]$ is injective because $s^{-1}(0)=\{a\}$.
Pulling back \eqref{eq:cone} gives $\rho_s$, so it is a metric.
Continuity of $s$ and the estimate
\[
  \rho_s(x,y)\le |s(x)-s(y)|+s(x)d(x,y)
\]
show that $\id:(X,\tau)\to(X,\tau_s)$ is continuous.
For $x\ne a$, if $\rho_s(x,y)<s(x)/2$, then $s(y)>s(x)/2$ and
\begin{equation}\label{eq:cone-local-lower}
  \rho_s(x,y)\ge\frac{s(x)}2 d(x,y).
\end{equation}
Thus the inverse identity is continuous at every $x\ne a$.

Since $s(a)=0$, we have $\rho_s(a,x)=s(x)$, proving
\eqref{eq:sublevels}. If $U\in\tau_s$, then $U\in\tau$, and a set
containing $a$ must contain a sublevel set. Conversely, a $\tau$-open
set avoiding $a$ is open in the common open subspace
$X\setminus\{a\}$, hence is $\tau_s$-open in $X$. If $U\in\tau$
contains $a$ and contains a sublevel set, then $a$ is a $\tau_s$-interior
point, and every other point of $U$ is a $\tau_s$-interior point by
the already proved local agreement. This proves the description of open
sets, and hence independence of $d$.
\end{proof}

\begin{proposition}[Completeness of the specified cone metric]\label{prop:cone-complete}
If $d\le1$ is complete and $s\in\Ga$, then $\rho_s$ is complete.
\end{proposition}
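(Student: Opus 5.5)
Take a $\rho_s$-Cauchy sequence $(y_k)$ and follow the pattern of the completeness argument in Lemma~\ref{lem:minmetric}. The key is to control the gauge values first, then split according to their limit.

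\textbf{Step 1: the gauge values converge.} From \eqref{eq:cone-pullback} we have
\[
  |s(x)-s(y)|\le\rho_s(x,y).
\]
Hence $(s(y_k))$ is Cauchy in $\R$ and converges to some $L\ge0$.

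\textbf{Step 2: the case $L=0$.} By \eqref{eq:sublevels}, $\rho_s(a,y_k)=s(y_k)\to0$. So $y_k\to a$ in $\rho_s$, and we are done.

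\textbf{Step 3: the case $L>0$.} Discard finitely many terms so that $s(y_k)\ge L/2$ for all $k$. Then
\[
  \min\{s(y_k),s(y_m)\}\ge L/2,
\qquad\text{so}\qquad
  \rho_s(y_k,y_m)\ge \tfrac L2\,d(y_k,y_m).
\]
Thus $(y_k)$ is $d$-Cauchy. Since $d$ is complete, $y_k\to y$ in $\tau$ for some $y\in X$.

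\textbf{Step 4: identify the limit.} By Proposition~\ref{prop:gauge-localization}, $\id:(X,\tau)\to(X,\tau_s)$ is continuous, so $y_k\to y$ in $\tau_s$. Concretely, continuity of $s$ gives $s(y)=L>0$, so $y\ne a$. Then
\[
  \rho_s(y_k,y)\le|s(y_k)-s(y)|+s(y)\,d(y_k,y)\to0.
\]
This uses the estimate $\rho_s(x,y)\le|s(x)-s(y)|+s(x)d(x,y)$ from the proof of Proposition~\ref{prop:gauge-localization}, applied with the roles of the two points arranged so that the minimum is bounded by the factor multiplying $d$. Hence $(y_k)$ converges in $\rho_s$.

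\textbf{Main obstacle.} The only delicate point is Step 3: converting $\rho_s$-Cauchy into $d$-Cauchy. This works only because the gauge values stay bounded away from $0$, which is exactly why the case split on $L$ is needed. Note that $d\le1$ plays no role here beyond making $\rho_s$ a metric via Lemma~\ref{lem:cone}.
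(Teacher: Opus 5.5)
Your proof is correct and follows essentially the same route as the paper. You bound $|s(x)-s(y)|\le\rho_s(x,y)$ to get $s(y_k)\to L$, send the sequence to $a$ when $L=0$, and when $L>0$ use $\rho_s\ge\tfrac{L}{2}d$ eventually to obtain a $d$-limit $y$, then conclude $\rho_s(y_k,y)\to0$ from continuity of $s$ and the defining formula.
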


\begin{proof}
Let $(y_k)$ be $\rho_s$-Cauchy. Since
$|s(y_k)-s(y_m)|\le\rho_s(y_k,y_m)$, there is $L\ge0$ with
$s(y_k)\to L$. If $L=0$, then $\rho_s(a,y_k)=s(y_k)\to0$.
If $L>0$, then eventually
\[
  d(y_k,y_m)\le \frac{2}{L}\rho_s(y_k,y_m).
\]
Thus $(y_k)$ is $d$-Cauchy and converges to some $y\in X$. Continuity
of $s$ gives $s(y_k)\to s(y)$, and the defining formula yields
$\rho_s(y_k,y)\to0$.
\end{proof}

\begin{proposition}[Representation]\label{prop:representation}
Let $\sigma\in\La$, let $q$ be any compatible metric for $\sigma$,
and set $s(x)=q(x,a)$. Then $s\in\Ga$ and $\tau_s=\sigma$.
\end{proposition}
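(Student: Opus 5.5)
We first check that $s\in\Ga$, then compare open sets using the description in Proposition~\ref{prop:gauge-localization}. Since $q$ is a metric, $s(x)=q(x,a)\ge0$ and $s^{-1}(0)=\{a\}$. The function $s$ is $\sigma$-continuous (it is $1$-Lipschitz for $q$). Because $\sigma\subseteq\tau$, the identity $(X,\tau)\to(X,\sigma)$ is continuous, so $s$ is also $\tau$-continuous. Hence $s\in\Ga$, and $\rho_s$ and $\tau_s$ are defined.

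To prove $\tau_s=\sigma$, we use the characterization of $\tau_s$ from Proposition~\ref{prop:gauge-localization}: $U\in\tau_s$ if and only if $U\in\tau$ and either $a\notin U$, or $a\in U$ and $\{s<\varepsilon\}\subseteq U$ for some $\varepsilon>0$. The key observation is that
\[
  \{s<\varepsilon\}=B_q(a,\varepsilon),
\]
so these sublevel sets form a $\sigma$-neighborhood base at $a$. We then show that $\sigma$ has exactly the same description.

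\emph{Inclusion $\sigma\subseteq\tau_s$.} Let $U\in\sigma$. Then $U\in\tau$ because $\sigma\subseteq\tau$. If $a\in U$, then $\sigma$-openness gives some $B_q(a,\varepsilon)=\{s<\varepsilon\}\subseteq U$. So $U\in\tau_s$.

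\emph{Inclusion $\tau_s\subseteq\sigma$.} Let $U\in\tau$ satisfy the condition above. The set $X\setminus\{a\}$ is open in both $\sigma$ and $\tau$, and the two topologies agree on it by \eqref{eq:localization}. Hence $U\setminus\{a\}$ is $\tau$-open in the subspace $X\setminus\{a\}$, therefore $\sigma$-open in that subspace, therefore $\sigma$-open in $X$. If $a\notin U$, we are done. If $a\in U$, then
\[
  U=(U\setminus\{a\})\cup B_q(a,\varepsilon)
\]
is a union of $\sigma$-open sets, hence $\sigma$-open.

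We expect no step to be a real obstacle. The only point needing care is the $\tau$-continuity of $s$, which comes from $\sigma\subseteq\tau$ and not directly from $q$. A second point is passing from "open in the subspace $X\setminus\{a\}$" to "open in $X$", which is valid because that subspace is open in both topologies.
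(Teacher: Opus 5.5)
Your proof is correct and follows essentially the same route as the paper: $\tau$-continuity of $s$ comes from $\sigma\subseteq\tau$, the two topologies agree away from $a$, and the identity $\{s<\varepsilon\}=B_q(a,\varepsilon)$ matches the neighborhood bases at $a$. The paper concludes directly from equality of local bases at every point, whereas you unwind this through the open-set description of Proposition~\ref{prop:gauge-localization}; this is only a difference in presentation.
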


\begin{proof}
The function $s$ is $\sigma$-continuous, hence $\tau$-continuous because
$\sigma\subseteq\tau$, and it vanishes exactly at $a$. The two
topologies $\tau_s$ and $\sigma$ agree away from $a$. At $a$,
\[
  B_{\rho_s}(a,\varepsilon)=\{s<\varepsilon\}=B_q(a,\varepsilon).
\]
Their neighborhood bases therefore agree at every point, so the
topologies are equal.
\end{proof}

\subsection{Cofinal order and finite lattice operations}

For $s,t\in\Ga$, define
\begin{equation}\label{eq:preorder}
  s\preceq t
  \quad\Longleftrightarrow\quad
  \forall\varepsilon>0\ \exists\delta>0:
       \{t<\delta\}\subseteq\{s<\varepsilon\}.
\end{equation}
Write $s\asymp t$ if both $s\preceq t$ and $t\preceq s$. Thus
$s\asymp t$ means that the two sublevel families are mutually cofinal.

\begin{theorem}[Order representation]\label{thm:order}
For $s,t\in\Ga$,
\[
  \tau_s\subseteq\tau_t\quad\Longleftrightarrow\quad s\preceq t.
\]
Consequently, $\tau_s=\tau_t$ if and only if $s\asymp t$, and
\[
  \Ga/{\asymp}\longrightarrow\La,
  \qquad [s]\longmapsto\tau_s,
\]
is an order isomorphism, with the quotient order induced by $\preceq$.
\end{theorem}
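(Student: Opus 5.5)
The plan is to reduce everything to the description of $\tau_s$-open sets in Proposition~\ref{prop:gauge-localization} and to the neighborhood bases \eqref{eq:sublevels} at $a$. The two topologies $\tau_s$ and $\tau_t$ coincide with $\tau$ on the open subspace $X\setminus\{a\}$. So the inclusion $\tau_s\subseteq\tau_t$ can only fail at the point $a$, and I will compare the neighborhood bases there.

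For ($\Rightarrow$), assume $\tau_s\subseteq\tau_t$ and fix $\varepsilon>0$. The set $\{s<\varepsilon\}=B_{\rho_s}(a,\varepsilon)$ is $\tau_s$-open and contains $a$, hence it is $\tau_t$-open. By Proposition~\ref{prop:gauge-localization} applied to $t$, it contains $\{t<\delta\}$ for some $\delta>0$, which gives $s\preceq t$.

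For ($\Leftarrow$), assume $s\preceq t$ and let $U\in\tau_s$. Then $U\in\tau$. If $a\notin U$, the criterion of Proposition~\ref{prop:gauge-localization} immediately gives $U\in\tau_t$. If $a\in U$, the same proposition yields $\{s<\varepsilon\}\subseteq U$ for some $\varepsilon>0$. By hypothesis there is $\delta>0$ with $\{t<\delta\}\subseteq\{s<\varepsilon\}\subseteq U$. The criterion for $t$ then gives $U\in\tau_t$.

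The consequences follow formally. Antisymmetry of inclusion gives $\tau_s=\tau_t$ if and only if $s\preceq t$ and $t\preceq s$, that is, $s\asymp t$. The relation $\preceq$ is a preorder: reflexivity is clear with $\delta=\varepsilon$, and transitivity follows by composing the $\delta$'s. It therefore induces a partial order on $\Ga/{\asymp}$. The map $[s]\mapsto\tau_s$ is well defined and injective by the $\asymp$ statement. It is surjective onto $\La$ by Proposition~\ref{prop:representation}, and its codomain is correct because $\tau_s\in\La$ by Proposition~\ref{prop:gauge-localization}. By the equivalence just proved, it preserves and reflects order, so it is an order isomorphism.

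I do not expect a real obstacle. The only point needing care is the case $a\in U$ in ($\Leftarrow$). There one must use both halves of the open-set criterion: $\tau$-openness, which is automatic from $\tau_s\subseteq\tau$, and the sublevel containment. The points of $U$ other than $a$ need no separate argument, since the criterion already handles them through the local agreement established in Proposition~\ref{prop:gauge-localization}.
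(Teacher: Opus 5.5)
Your proposal is correct and follows the paper's proof essentially step for step: both directions compare the sublevel neighborhood bases at $a$ through the open-set description in Proposition~\ref{prop:gauge-localization}, and the isomorphism follows from antisymmetry of inclusion together with surjectivity from Proposition~\ref{prop:representation}. The only difference is that you also spell out that $\preceq$ is a preorder and that the map $[s]\mapsto\tau_s$ is well defined, which the paper leaves implicit.
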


\begin{proof}
If $\tau_s\subseteq\tau_t$, then each $\{s<\varepsilon\}$ is a
$\tau_t$-neighborhood of $a$ and contains some $\{t<\delta\}$.
Conversely, suppose $s\preceq t$ and $U\in\tau_s$. If $a\notin U$,
then $U$ is open in the common topology away from $a$, hence is
$\tau_t$-open. If $a\in U$, choose $\varepsilon>0$ with
$\{s<\varepsilon\}\subseteq U$, and then $\delta>0$ with
$\{t<\delta\}\subseteq\{s<\varepsilon\}$. The open-set description
in Proposition~\ref{prop:gauge-localization} gives $U\in\tau_t$.
Equality is mutual inclusion, and surjectivity follows from
Proposition~\ref{prop:representation}.
\end{proof}

\begin{corollary}[A distributive lattice]\label{cor:lattice}
The family $\La$ is closed under nonempty finite joins and meets in
the lattice of all topologies on $X$. For $s,t\in\Ga$,
\begin{equation}\label{eq:lattice}
  \tau_s\vee\tau_t=\tau_{\max(s,t)},
  \qquad
  \tau_s\wedge\tau_t=\tau_{\min(s,t)}.
\end{equation}
It is a distributive lattice with greatest element $\tau$.
\end{corollary}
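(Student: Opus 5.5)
Since $\Ga$ is closed under pointwise $\max$ and $\min$, the argument reduces to the order isomorphism of Theorem~\ref{thm:order}. Both $\max(s,t)$ and $\min(s,t)$ are $\tau$-continuous and nonnegative. Each vanishes exactly at $a$: $\max(s,t)(x)=0$ iff $s(x)=t(x)=0$, and $\min(s,t)(x)=0$ iff $s(x)=0$ or $t(x)=0$, and in both cases this happens only for $x=a$. So $\max(s,t),\min(s,t)\in\Ga$, and their topologies lie in $\La$ by Proposition~\ref{prop:gauge-localization}.

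The first step is the sublevel identities
\[
  \{\max(s,t)<\varepsilon\}=\{s<\varepsilon\}\cap\{t<\varepsilon\},\qquad
  \{\min(s,t)<\varepsilon\}=\{s<\varepsilon\}\cup\{t<\varepsilon\}.
\]
From these, $s,t\preceq\max(s,t)$ and $\min(s,t)\preceq s,t$ hold with $\delta=\varepsilon$. By Theorem~\ref{thm:order}, $\tau_s,\tau_t\subseteq\tau_{\max(s,t)}$ and $\tau_{\min(s,t)}\subseteq\tau_s\cap\tau_t$.

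Next I identify the joins and meets in the lattice of all topologies. The join $\tau_s\vee\tau_t$ there is the topology generated by $\tau_s\cup\tau_t$. Since it is contained in $\tau_{\max(s,t)}$, I only need the reverse inclusion. I use the open-set description in Proposition~\ref{prop:gauge-localization}. A $\tau_{\max}$-open $U$ with $a\notin U$ is already $\tau_s$-open. If $a\in U$, then $U\supseteq\{s<\varepsilon\}\cap\{t<\varepsilon\}$, and the right side is an intersection of a $\tau_s$-open set and a $\tau_t$-open set. Every other point of $U$ has a $\tau_s$-open neighborhood inside $U$. Hence $U$ is a union of members of the generated topology.

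For the meet, the lattice meet of topologies is the intersection $\tau_s\cap\tau_t$, which is a topology. Let $U\in\tau_s\cap\tau_t$ contain $a$. Then $U\in\tau$ and $U\supseteq\{s<\varepsilon_1\}\cup\{t<\varepsilon_2\}\supseteq\{\min(s,t)<\min(\varepsilon_1,\varepsilon_2)\}$, so $U\in\tau_{\min(s,t)}$. Hence $\tau_s\cap\tau_t=\tau_{\min(s,t)}$, which proves \eqref{eq:lattice}. By Proposition~\ref{prop:representation}, every member of $\La$ is some $\tau_s$, so $\La$ is closed under finite joins and meets; nonempty finite families follow by induction.

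For distributivity I transport through the isomorphism of Theorem~\ref{thm:order}. The operations $\max$ and $\min$ distribute pointwise, for example $\min(s,\max(t,u))=\max(\min(s,t),\min(s,u))$. Applying $\tau_{(\cdot)}$ and \eqref{eq:lattice} turns this into $\tau_s\wedge(\tau_t\vee\tau_u)=(\tau_s\wedge\tau_t)\vee(\tau_s\wedge\tau_u)$ in $\La$. For the greatest element, $\tau\in\La$ trivially, since it is metrizable and agrees with itself; alternatively $\tau=\tau_s$ for $s=d(\cdot,a)$. Every member of $\La$ is contained in $\tau$ by definition.

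The main obstacle is the join. A priori the join in the lattice of all topologies need not be metrizable or localized, so the equality $\tau_s\vee\tau_t=\tau_{\max(s,t)}$ must be checked directly via the open-set description rather than inferred from order-theoretic suprema inside $\La$.
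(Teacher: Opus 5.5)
Your proof is correct and follows essentially the same route as the paper. You verify $\max(s,t),\min(s,t)\in\Ga$, use the sublevel identities, check the join and the intersection directly via the open-set description of Proposition~\ref{prop:gauge-localization}, transport distributivity from the pointwise identities, and take $\tau=\tau_{d(\cdot,a)}$ as the greatest element. The only cosmetic difference is that you obtain the easy inclusions $\tau_s,\tau_t\subseteq\tau_{\max(s,t)}$ and $\tau_{\min(s,t)}\subseteq\tau_s\cap\tau_t$ through Theorem~\ref{thm:order}, whereas the paper reads them off the local bases directly.
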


\begin{proof}
Put $u=\max(s,t)$ and $v=\min(s,t)$; both lie in $\Ga$.
The identities
\[
  \{u<\varepsilon\}=\{s<\varepsilon\}\cap\{t<\varepsilon\},
  \qquad
  \{v<\varepsilon\}=\{s<\varepsilon\}\cup\{t<\varepsilon\}
\]
give the required local bases. More explicitly, $\tau_u$ contains
both $\tau_s$ and $\tau_t$. Conversely, if $U\in\tau_u$ contains $a$,
it contains an intersection
$\{s<\varepsilon\}\cap\{t<\varepsilon\}$, which is open in
$\tau_s\vee\tau_t$; also $U\setminus\{a\}$ is open in both
topologies. Thus $U$ is open in the join. Sets avoiding $a$ cause no
additional condition.

The meet of two topologies is their intersection. If $a\in U$ and
$U\in\tau_s\cap\tau_t$, then $U$ contains both
$\{s<\varepsilon\}$ and $\{t<\delta\}$ for some
$\varepsilon,\delta>0$. With $\eta=\min\{\varepsilon,\delta\}$,
it contains $\{v<\eta\}$. Conversely, a set containing a sublevel
set of $v$ contains a sublevel set of each of $s$ and $t$. The
open-set description proves the meet identity.

Distributivity follows from the pointwise distributive identities for
minimum and maximum. Finally, $r(x)=d(x,a)$ belongs to $\Ga$ and
$\tau_r=\tau$, since its sublevel sets are the original balls at $a$.
\end{proof}

\begin{example}[Two incomparable coarsenings]\label{ex:lattice}
Let $X=\{a\}\cup\{u_n:n\in\N\}\cup\{v_n:n\in\N\}$ consist of
distinct points and have the discrete topology. Define
\[
\begin{array}{c|ccc}
   &a&u_n&v_n\\ \hline
 s &0&1/n&1\\
 t &0&1&1/n
\end{array}
\]
Then $u_n\to a$ in $\tau_s$ but not in $\tau_t$, and $v_n\to a$
in $\tau_t$ but not in $\tau_s$. Thus the two topologies are
incomparable. Their join is discrete because $\max(s,t)=1$ away
from $a$. In their meet, a neighborhood of $a$ contains all but
finitely many points of $X\setminus\{a\}$; hence the meet is the
one-point compactification of that countable discrete space.
\end{example}

\begin{proof}[Proof of Theorem~\ref{thm:main-gauges}]
The metric and local assertions are
Proposition~\ref{prop:gauge-localization}, representation is
Proposition~\ref{prop:representation}, the order criterion is
Theorem~\ref{thm:order}, and completeness is
Proposition~\ref{prop:cone-complete}.
\end{proof}

\section{Extension traces, strictness, and preservation}\label{sec:structure}

\subsection{The relation to one-point extensions}

Put $Y=X\setminus\{a\}$ with its $\tau$-subspace topology.
Recall that an \emph{extension trace} on $Y$ is a sequence of nonempty
open sets $(U_n)$ such that
\begin{equation}\label{eq:trace}
  \clos{U_{n+1}}^{\,Y}\subseteq U_n,
  \qquad \bigcap_{n\ge1}U_n=\varnothing.
\end{equation}
Repeated sets can be discarded. Such a trace defines a one-point
metrizable extension by using $\{a\}\cup U_n$ as a neighborhood base
at the added point. This is Alexander's metrization theorem as stated
in \cite[Theorem~3.3]{HJW}. The cofinal comparison and the corresponding
order representation are \cite[Theorems~3.5 and~3.9]{HJW}. These trace results apply to general
metrizable spaces and do not require local compactness.

\begin{proposition}[The coarsening constraint on traces]\label{prop:traces}
Suppose $s\in\Ga$ and $a$ is nonisolated in $\tau_s$. Then
\[
  U_n(s)=\{x\in Y:s(x)<2^{-n}\}
\]
is an extension trace, after discarding repetitions if necessary, and
generates $(X,\tau_s)$. Conversely, a trace $(U_n)$ on $Y$ generates
a topology in $\La$ if and only if $\{a\}\cup U_n\in\tau$ for
every $n$.
\end{proposition}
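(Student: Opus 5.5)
The plan is to prove the forward statement first and then the converse, treating both through the open-set description in Proposition~\ref{prop:gauge-localization}.

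\textbf{Forward statement.} Each $U_n(s)$ is $\tau$-open in $Y$ because $s$ is $\tau$-continuous. It is nonempty because $a$ is nonisolated in $\tau_s$: by \eqref{eq:sublevels}, every $\rho_s$-ball at $a$ is $\{s<2^{-n}\}$, and it must contain a point other than $a$. The intersection $\bigcap_n U_n(s)$ is empty because $s>0$ on $Y$. For the closure condition, take $y\in\clos{U_{n+1}(s)}^{\,Y}$. Continuity gives $s(y)\le 2^{-n-1}<2^{-n}$, so $y\in U_n(s)$.

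If repetitions occur, we pass to a subsequence of distinct sets. Each closure inclusion $\clos{U_{k+1}}\subseteq U_k$ then follows by transitivity. The only thing to check is that the sequence does not become eventually constant. It cannot: an eventually constant sequence would have a nonempty intersection.

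For generation, the sets $\{a\}\cup U_n(s)=\{s<2^{-n}\}$ form a $\tau_s$-neighborhood base at $a$, again by \eqref{eq:sublevels}. Away from $a$, the topology $\tau_s$ coincides with the topology of $Y$. The extension topology defined by the trace has exactly these neighborhood bases, so it equals $\tau_s$.

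\textbf{Converse.} Let $\sigma$ be the topology generated by a trace $(U_n)$. By \cite[Theorem~3.3]{HJW}, $\sigma$ is metrizable. It induces the topology of $Y$ on $Y$, and $Y$ is $\sigma$-open. Hence $\sigma|_{Y}=\tau|_{Y}$ automatically, and $\sigma\in\La$ if and only if $\sigma\subseteq\tau$.

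If $\sigma\subseteq\tau$, then each basic neighborhood $\{a\}\cup U_n$ is $\sigma$-open. This holds because $U_n$ is open in $Y$ and the set contains the basic set at $a$. It is therefore $\tau$-open.

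Conversely, suppose every $\{a\}\cup U_n\in\tau$, and take $V\in\sigma$. If $a\notin V$, then $V$ is open in $Y$, which is $\tau$-open, so $V\in\tau$. If $a\in V$, then $V\supseteq\{a\}\cup U_n$ for some $n$, and $V=(\{a\}\cup U_n)\cup(V\setminus\{a\})$. Both pieces are $\tau$-open, so $V\in\tau$.

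\textbf{Main obstacle.} The only delicate points are bookkeeping. One is the repetition discarding. The other is making sure that ``generates'' is read through the neighborhood-base definition of Alexander's extension, so that \cite[Theorem~3.3]{HJW} supplies metrizability and nothing beyond the above needs to be verified.
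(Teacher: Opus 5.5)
Your proof is correct and follows the paper's argument essentially step for step: nonemptiness comes from nonisolation, the closure inclusion from the $Y$-closed set $\{x\in Y:s(x)\le 2^{-(n+1)}\}$, generation from \eqref{eq:sublevels}, and the converse from splitting a $\sigma$-open set containing $a$ as $(\{a\}\cup U_n)\cup(W\cap Y)$. Your additional check that discarding repetitions cannot leave an eventually constant sequence is a small detail the paper leaves implicit.
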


\begin{proof}
Nonisolation makes every $U_n(s)$ nonempty. Continuity and positivity
of $s$ on $Y$ give
\[
  \clos{U_{n+1}(s)}^{\,Y}
  \subseteq\{x\in Y:s(x)\le2^{-(n+1)}\}
  \subseteq U_n(s),
  \qquad\bigcap_nU_n(s)=\varnothing.
\]
The trace generates $\tau_s$ by \eqref{eq:sublevels}.

For the converse, let $\sigma$ be the metrizable extension topology
defined by $(U_n)$. If $\sigma\subseteq\tau$, its basic sets
$\{a\}\cup U_n$ are $\tau$-open. If all these basic sets are
$\tau$-open, then every $\sigma$-open set avoiding $a$ is open in
$Y$, hence in $\tau$. A $\sigma$-open set $W$ containing $a$ is the
union of some $\{a\}\cup U_n\subseteq W$ and the $\tau$-open set
$W\cap Y$. Thus $\sigma\subseteq\tau$.
\end{proof}

\begin{remark}\label{rem:extension-comparison}
If $a$ is nonisolated in $\tau$, it is nonisolated in every
$\sigma\in\La$, so all these topologies are extensions of the same
dense subspace $Y$. If $a$ is isolated in $\tau$, every one-point
metrizable extension of $Y$ is coarser than the topological sum
$Y\sqcup\{a\}$, which is $\tau$; the only member of $\La$ in which
$a$ remains isolated is $\tau$ itself. In both cases, $\tau$ is an
upper bound on the allowed extension topologies. Theorem~\ref{thm:order}
expresses their cofinal comparison through continuous scalar gauges,
and Proposition~\ref{prop:traces} specifies the constraint imposed
by $\tau$.
\end{remark}

\subsection{Closed-discrete witnesses and compactness}

\begin{theorem}[Strictness criterion]\label{thm:strictness}
Let $\sigma\in\La$, and let $s\in\Ga$ represent $\sigma$.
The following are equivalent:
\begin{enumerate}
\item $\sigma\subsetneq\tau$;
\item some $\tau$-open neighborhood $U$ of $a$ satisfies
      $\inf_{x\in X\setminus U}s(x)=0$;
\item there is a countably infinite $\tau$-closed discrete set
      $D=\{x_n:n\in\N\}\subseteq X\setminus\{a\}$ with
      $x_n\to a$ in $\sigma$.
\end{enumerate}
Equivalently, condition \textup{(ii)} says that, for any compatible
metric $d_0$ of $\tau$, there is an $r>0$ such that
\[
  \inf\{s(x):d_0(x,a)\ge r\}=0.
\]
Here the infimum of the empty set is understood to be $+\infty$.
\end{theorem}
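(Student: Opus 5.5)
We prove the cycle (i)$\Rightarrow$(ii)$\Rightarrow$(iii)$\Rightarrow$(i) and then check the metric reformulation of (ii). Throughout we use the open-set description of Proposition~\ref{prop:gauge-localization}: $\sigma=\tau_s$ consists of the $\tau$-open sets $U$ such that either $a\notin U$, or $\{s<\varepsilon\}\subseteq U$ for some $\varepsilon>0$.

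For (i)$\Rightarrow$(ii), argue by contraposition. Suppose every $\tau$-open neighborhood $U$ of $a$ satisfies $\inf_{X\setminus U}s>0$. Take such a $U$ and choose $\varepsilon>0$ below this infimum. Then $\{s<\varepsilon\}\subseteq U$. By the open-set description, every $\tau$-open set containing $a$ is then $\sigma$-open. Sets avoiding $a$ are $\sigma$-open by localization, so $\sigma=\tau$.

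For (ii)$\Rightarrow$(iii), fix a compatible metric $d_0$ for $\tau$ and a $\tau$-open $U\ni a$ with $\inf_{X\setminus U}s=0$.
\begin{enumerate}
\item Since $s$ is positive on $X\setminus U$ and the infimum is $0$, we can inductively pick $x_n\in X\setminus U$ with $0<s(x_{n+1})<\min\{s(x_n),1/n\}$. The values $s(x_n)$ strictly decrease, so the $x_n$ are distinct, $x_n\ne a$, and $s(x_n)\to0$.
\item Then $\rho_s(a,x_n)=s(x_n)\to0$, so $x_n\to a$ in $\sigma$.
\item The main point is that $D=\{x_n\}$ is $\tau$-closed and discrete. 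Suppose $y$ is a $\tau$-accumulation point of $D$. First, $y\ne a$, because the $\tau$-neighborhood $U$ of $a$ misses $D$. Hence $s(y)>0$ by $s^{-1}(0)=\{a\}$.
\item By $\tau$-continuity of $s$, some $\tau$-neighborhood of $y$ has $s>s(y)/2$ on it. It therefore contains only the finitely many $x_n$ with $s(x_n)>s(y)/2$. In a $T_1$ space this means $y$ is not an accumulation point, a contradiction.
\end{enumerate}
So $D$ has no $\tau$-accumulation points, which makes it closed and discrete.

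For (iii)$\Rightarrow$(i), note that $X\setminus D$ is a $\tau$-open neighborhood of $a$ containing no $x_n$. Hence $x_n\not\to a$ in $\tau$. Since $x_n\to a$ in $\sigma$, we get $\sigma\ne\tau$, and so $\sigma\subsetneq\tau$. This is the same argument as at the end of the proof of Theorem~\ref{thm:main-existence}.

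For the reformulation, fix a compatible metric $d_0$ for $\tau$.
\begin{itemize}
\item Suppose (ii) holds with $U$. Pick $r>0$ with $B_{d_0}(a,r)\subseteq U$. Then $\{x:d_0(x,a)\ge r\}\supseteq X\setminus U$, so its infimum of $s$ is $0$.
\item Conversely, given such an $r$, take $U=B_{d_0}(a,r)$. Then $X\setminus U=\{x:d_0(x,a)\ge r\}$ exactly, and (ii) holds.
\end{itemize}
The convention $\inf\varnothing=+\infty$ only ensures that an empty complement never yields $0$; in particular, when $X\setminus U$ is empty the condition correctly fails.
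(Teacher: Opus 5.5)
Your proof is correct, and the overall structure matches the paper's. You use the open-set description of Proposition~\ref{prop:gauge-localization} for (i)$\Rightarrow$(ii), the neighborhood $X\setminus D$ for (iii)$\Rightarrow$(i), and the same argument for the metric reformulation. The real difference is in the one substantive step: showing that $D$ is $\tau$-closed discrete in (ii)$\Rightarrow$(iii).

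The paper picks $y_n\in X\setminus U$ with $s(y_n)<1/n$ and passes to a subsequence of distinct points. It then rules out a $\tau$-accumulation point $b\neq a$ using metrizability of $\tau$. This gives a subsequence converging to $b$ in $\tau$, hence in $\sigma$. That contradicts uniqueness of limits in the Hausdorff space $(X,\sigma)$, where the sequence already converges to $a$.

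You get distinctness for free by choosing strictly decreasing $s$-values. You then rule out an accumulation point $y$ directly. Since $s(y)>0$, $\tau$-continuity gives a neighborhood on which $s>s(y)/2$. That neighborhood meets $D$ in only finitely many points because $s(x_n)\to 0$.

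Your route is more self-contained. It uses only continuity and positivity of the gauge and the $T_1$ property of $\tau$, with no sequence extraction or first countability. The paper's argument is phrased intrinsically in terms of $\sigma$. It uses nothing about $s$ beyond $x_n\to a$ in $\sigma$, so it would apply verbatim to any Hausdorff $\sigma\subseteq\tau$. The two arguments are closely related: the disjoint sets $\{s<s(y)/2\}$ and $\{s>s(y)/2\}$ are exactly what separate $a$ and $y$ in $\tau_s$.
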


\begin{proof}
By Proposition~\ref{prop:gauge-localization}, equality $\sigma=\tau$
holds exactly when every $\tau$-open neighborhood of $a$ contains a
sublevel set of $s$. This is equivalent to
$\inf_{X\setminus U}s>0$ for every such $U$, proving
\textup{(i)}$\Longleftrightarrow$\textup{(ii)}.

Assume \textup{(ii)}. Choose $y_n\in X\setminus U$ with
$s(y_n)<1/n$. Then $y_n\to a$ in $\sigma$. Because every point of
$X\setminus U$ has positive $s$-value, the range is infinite and each
point occurs only finitely often. Passing to a subsequence yields
distinct points $x_n\in X\setminus U$ with $x_n\to a$ in $\sigma$.
If their range $D$ had a $\tau$-accumulation point $b$, then
$b\in X\setminus U$ and $b\ne a$. Metrizability would give a
subsequence converging to $b$ in $\tau$, hence in $\sigma$. This
contradicts uniqueness of the limit in the Hausdorff space $(X,\sigma)$.
Thus $D$ is $\tau$-closed discrete.

If \textup{(iii)} holds, the $\tau$-open neighborhood $X\setminus D$
of $a$ prevents convergence to $a$ in $\tau$, so $\sigma\ne\tau$.
Finally, choose $r>0$ with $B_{d_0}(a,r)\subseteq U$ to obtain the
metric version of \textup{(ii)}. Its converse follows by taking
$U=B_{d_0}(a,r)$.
\end{proof}

\begin{corollary}[Localized compactness characterization]\label{cor:compactness}
For a metrizable space $(X,\tau)$ and a prescribed $a\in X$, the
following are equivalent:
\begin{enumerate}
\item $(X,\tau)$ is noncompact;
\item there is a strict member of $\La$.
\end{enumerate}
Consequently, a metrizable topology admits no strictly coarser
metrizable topology if and only if it is compact.
\end{corollary}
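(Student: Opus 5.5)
The plan is to prove (i)$\Rightarrow$(ii) by Theorem~\ref{thm:main-existence} and (ii)$\Rightarrow$(i) by the strictness criterion, Theorem~\ref{thm:strictness}.

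For (i)$\Rightarrow$(ii), assume $(X,\tau)$ is noncompact and fix a compatible metric $d$. Lemma~\ref{lem:discrete} gives a countably infinite $\tau$-closed discrete set $D\subseteq X\setminus\{a\}$. Theorem~\ref{thm:main-existence}, applied with $\lambda_n=1/n$, then gives a metric $p$ whose topology $\tau_p$ lies in $\La$ with $\tau_p\subsetneq\tau$. That is the required strict member.

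For (ii)$\Rightarrow$(i), suppose $\sigma\in\La$ is strict. Proposition~\ref{prop:representation} represents $\sigma$ by a gauge $s\in\Ga$, and Theorem~\ref{thm:strictness}(iii) then yields a countably infinite $\tau$-closed discrete set $D=\{x_n\}$. An infinite closed discrete subset cannot exist in a compact space: it would be compact and discrete, hence finite. So $(X,\tau)$ is noncompact. One could also argue directly: $\id:(X,\tau)\to(X,\sigma)$ is a continuous bijection, $\sigma$ is Hausdorff, and if $\tau$ were compact this map would be a homeomorphism, giving $\sigma=\tau$. This shortcut avoids the gauge machinery, and I would mention it as the cleanest route.

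For the ``consequently'' clause, first let $\tau$ be compact and let $\sigma\subseteq\tau$ be metrizable. Then $\sigma$ is Hausdorff, so the same compact-to-Hausdorff bijection argument gives $\sigma=\tau$. Hence no strictly coarser metrizable topology exists. This holds for every coarser metrizable topology, not only localized ones.

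Conversely, if $\tau$ is noncompact, pick any $a\in X$. Since $X$ is nonempty, such an $a$ exists. Then (i)$\Rightarrow$(ii) produces a strict member of $\La$, which is a strictly coarser metrizable topology.

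I expect no real obstacle, since everything reduces to earlier results. The only point needing care is that the ``consequently'' direction for compact spaces covers arbitrary coarser metrizable topologies, not just members of $\La$. The compact-to-Hausdorff argument handles this uniformly, because it never uses localization at $a$.
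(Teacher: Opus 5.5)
Your proposal is correct and follows the paper's proof: Theorem~\ref{thm:main-existence} gives (i)$\Rightarrow$(ii), the closed discrete witness from Theorem~\ref{thm:strictness} gives (ii)$\Rightarrow$(i), and the final assertion uses the compact-to-Hausdorff bijection argument together with localization at the prescribed point. Your alternative bijection argument for (ii)$\Rightarrow$(i) is also valid; it is the same observation the paper uses for the final assertion.
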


\begin{proof}
Theorem~\ref{thm:main-existence} proves \textup{(i)}$\Longrightarrow$
\textup{(ii)}. Conversely, Theorem~\ref{thm:strictness} produces an
infinite closed discrete subset, which is impossible in a compact
space. For the last assertion, a continuous bijection from a compact
space to a Hausdorff space is a homeomorphism; the noncompact direction
is already localized at the prescribed point.
\end{proof}

\subsection{Standard preservation consequences}

The next proposition isolates properties determined by agreement off
one point. Its completeness part is the finite-remainder argument of
\cite[Theorem~2.2]{HJW}; the proof below also covers the case in which
the punctured subspace is not dense.

\begin{proposition}[Agreement off one point]\label{prop:preservation}
Let $\tau$ and $\sigma$ be metrizable topologies on $X$ with
$\tau|_{X\setminus\{a\}}=\sigma|_{X\setminus\{a\}}$.
No inclusion between them is assumed. Then:
\begin{enumerate}
\item $\Bor(X,\tau)=\Bor(X,\sigma)$;
\item $(X,\tau)$ is completely metrizable if and only if
      $(X,\sigma)$ is completely metrizable;
\item $(X,\tau)$ is Polish if and only if $(X,\sigma)$ is Polish.
\end{enumerate}
\end{proposition}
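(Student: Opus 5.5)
The plan is to exploit that $Y=X\setminus\{a\}$ carries the same subspace topology in both, and that $\{a\}$ is closed, hence $Y$ is open, in each topology (both are metrizable, hence $T_1$).

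For \textup{(i)}, let $W\subseteq X$ be $\tau$-open. Then $W\cap Y$ is open in $Y$. Since $Y$ is $\sigma$-open, $W\cap Y$ is $\sigma$-open. The set $W$ equals either $W\cap Y$ or $(W\cap Y)\cup\{a\}$, and $\{a\}$ is $\sigma$-closed, hence $\sigma$-Borel. So every $\tau$-open set is $\sigma$-Borel, which gives $\Bor(X,\tau)\subseteq\Bor(X,\sigma)$. The reverse inclusion follows by symmetry.

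For \textup{(ii)}, I will show that $(X,\tau)$ is completely metrizable if and only if $Y$ is completely metrizable; the same criterion then applies to $\sigma$. One direction is clear: $Y$ is open in $(X,\tau)$, hence a $G_\delta$, and a $G_\delta$ subspace of a completely metrizable space is completely metrizable. For the converse, $X=Y\cup\{a\}$. I split into two cases.

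\begin{enumerate}
\item If $a$ is isolated in $\tau$, then $X$ is the topological sum $Y\sqcup\{a\}$. A complete metric on $Y$, truncated at $1$, extends by setting the distance to $a$ equal to $1$, and the extension is complete.
\item If $a$ is not isolated, I use that $Y$ completely metrizable means $Y$ is a $G_\delta$ in any metrizable space containing it, in particular in a completion $\widehat X$ of $(X,d)$ for some compatible $d$. The point set $\{a\}$ is closed in $\widehat X$, hence $G_\delta$, so $X=Y\cup\{a\}$ is a $G_\delta$ in the complete space $\widehat X$. Then $X$ is completely metrizable by Alexandrov's theorem.
\end{enumerate}

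Case (ii) in fact covers case (i) as well, so I will simply apply the $G_\delta$ argument uniformly. The step I expect to need the most care is justifying that $Y$ is $G_\delta$ in $\widehat X$, via Lavrentiev/Alexandrov: a completely metrizable subspace is $G_\delta$ in every metric space containing it. This holds regardless of whether $Y$ is dense in $X$, which is why non-density causes no trouble.

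For \textup{(iii)}, Polish means separable and completely metrizable. Given \textup{(ii)}, it remains to show that separability transfers. $(X,\tau)$ is separable if and only if $Y$ is separable, because subspaces of separable metrizable spaces are separable and adding one point preserves separability. The same equivalence holds for $\sigma$, and $Y$ is the same space in both.
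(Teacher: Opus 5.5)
Your proposal is correct and takes essentially the same approach as the paper. It uses the same argument for (i), namely that a $\tau$-open set is $(W\cap Y)$ or $(W\cap Y)\cup\{a\}$. For (ii) it uses the same $G_\delta$-in-a-completion argument; your recasting as ``$X$ is completely metrizable iff $Y$ is'' just makes the paper's symmetry step explicit, and, as you note, the isolated-point case split is unnecessary. For (iii) it uses the same ``countable dense subset of $Y$ together with $a$'' argument.
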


\begin{proof}
Put $Y=X\setminus\{a\}$. If $U\in\tau$, then $U\cap Y$ is
$\sigma$-open, because $Y$ is $\sigma$-open and has the common subspace
topology. The set $U$ differs from $U\cap Y$ by at most the closed
singleton $\{a\}$, so $U$ is $\sigma$-Borel. Interchanging $\tau$ and
$\sigma$ proves \textup{(i)}.

Suppose $(X,\tau)$ is completely metrizable. Its open subspace $Y$
is completely metrizable. Embed $(X,\sigma)$ in the completion $Z$
of any compatible metric. By the standard $G_\delta$ characterization
of complete metrizability, $Y$ is $G_\delta$ in $Z$. The singleton
$\{a\}$ is also $G_\delta$ in the metric space $Z$. Finite unions of
$G_\delta$ sets are $G_\delta$, so $X=Y\cup\{a\}$ is $G_\delta$ in
$Z$ and is completely metrizable. Symmetry proves \textup{(ii)}.

If $(X,\tau)$ is Polish, its open subspace $Y$ is separable. A
countable dense subset of $Y$, together with $a$, is dense in
$(X,\sigma)$. Combining separability with \textup{(ii)} proves
\textup{(iii)}, again by symmetry.
\end{proof}

In particular, every member of $\La$ has the same Borel structure as
$\tau$. If $\tau$ is completely metrizable or Polish, so is every
member of $\La$. For a completely metrizable $\tau$, the explicit
complete metric is also available from
Propositions~\ref{prop:representation} and \ref{prop:cone-complete}:
choose a complete compatible $d\le1$ and use $s(x)=q(x,a)$ for any
compatible metric $q$ of $\sigma$. Together with
Corollary~\ref{cor:compactness}, this yields the corresponding compactness
characterizations using strictly coarser completely metrizable or Polish
topologies.

\section{Consequences and examples}\label{sec:examples}

\subsection{Bounded continuous functions}

For a topology $\nu$ on $X$, let $\Cb(X,\nu)$ denote the real Banach
algebra of bounded continuous functions with the supremum norm, and
write $\ev_x(f)=f(x)$.

\begin{proposition}\label{prop:functions}
Let $\sigma\in\La$ be strict and let $s\in\Ga$ represent
$\sigma$. Then $\Cb(X,\sigma)$ is a proper closed unital subalgebra
of $\Cb(X,\tau)$, consisting exactly of those $f\in\Cb(X,\tau)$
such that
\begin{equation}\label{eq:functions}
  \forall\varepsilon>0\ \exists\eta>0\ \forall x\in X:
  \quad s(x)<\eta\ \Longrightarrow\ |f(x)-f(a)|<\varepsilon.
\end{equation}
If $D=\{x_n:n\in\N\}$ is a closed-discrete witness from
Theorem~\ref{thm:strictness}, then
\[
  \ev_{x_n}\xrightarrow{w^*}\ev_a
       \quad\text{in }\Cb(X,\sigma)^*,
  \qquad
  \|\ev_{x_n}-\ev_a\|=2\quad(n\in\N),
\]
whereas $\ev_{x_n}$ does not converge weak-$*$ to $\ev_a$ on
$\Cb(X,\tau)$.
\end{proposition}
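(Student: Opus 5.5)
We first note that $\Cb(X,\sigma)\subseteq\Cb(X,\tau)$ because $\sigma\subseteq\tau$. It is a unital subalgebra, since sums, products and constants of $\sigma$-continuous bounded functions remain so. It is closed in the supremum norm because uniform limits of $\sigma$-continuous functions are $\sigma$-continuous.

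For the description \eqref{eq:functions}, we take $f\in\Cb(X,\tau)$. The two topologies agree on the open set $X\setminus\{a\}$, so $f$ is $\sigma$-continuous at every $x\ne a$ automatically. Hence $f\in\Cb(X,\sigma)$ exactly when $f$ is $\sigma$-continuous at $a$. By \eqref{eq:sublevels} the sets $\{s<\eta\}$ form a $\sigma$-neighborhood base at $a$, and $\sigma$-continuity at $a$ is then literally \eqref{eq:functions}.

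Next we take the witness $D=\{x_n\}$ from Theorem~\ref{thm:strictness}(iii), with $x_n\to a$ in $\sigma$ and $D$ $\tau$-closed discrete. Weak-$*$ convergence on $\Cb(X,\sigma)$ is just $f(x_n)\to f(a)$ for every $\sigma$-continuous $f$, which follows from $x_n\to a$ in $\sigma$.

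For the norm equality, we fix $n$. The set $D\cup\{a\}$ is $\tau$-closed and discrete, since $D$ is closed discrete and $a\notin D$. We need $f\in\Cb(X,\sigma)$ with $\|f\|\le1$, $f(x_n)=1$ and $f(a)=-1$; together with the trivial bound $\le2$ this gives norm $2$. Because $x_n\ne a$ and $(X,\sigma)$ is metrizable, Urysohn's lemma (or an explicit metric formula with a $\sigma$-compatible metric $q$, clipped to $[-1,1]$) gives a $\sigma$-continuous $f\colon X\to[-1,1]$ with $f(x_n)=1$ and $f(a)=-1$. This is where one might worry that a single $f$ should work for all $n$, but the norm statement is pointwise in $n$, so a separate $f$ for each $n$ suffices.

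Finally, non-convergence on $\Cb(X,\tau)$ is the real content. Since $D\cup\{a\}$ is $\tau$-closed and $D$, $\{a\}$ are disjoint $\tau$-closed sets, Urysohn's lemma in $(X,\tau)$ yields $f\in\Cb(X,\tau)$ with $f|_D=1$ and $f(a)=0$. Then $\ev_{x_n}(f)=1\not\to0=\ev_a(f)$. This same $f$ violates \eqref{eq:functions}, because it is not $\sigma$-continuous at $a$ while $x_n\to a$ in $\sigma$. So $f\in\Cb(X,\tau)\setminus\Cb(X,\sigma)$, which proves properness of the subalgebra.

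The main step needing care is keeping track of which topology each Urysohn function is built in: $\sigma$ for the norm-$2$ claim, $\tau$ for non-convergence and properness. Beyond that, the argument is routine.
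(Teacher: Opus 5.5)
Your proof is correct and follows the paper's argument step for step. It uses the same reduction of $\sigma$-continuity to the point $a$ via the sublevel base $\{s<\eta\}$, and the same appeal to $x_n\to a$ in $\sigma$ for weak-$*$ convergence. Its separating functions are likewise built in $\sigma$ (for the norm $2$) and in $\tau$ (for properness and non-convergence). The only difference is that where you invoke Urysohn's lemma, the paper writes these functions explicitly, namely $r(x)=d(x,a)/(d(x,a)+d(x,D))$ for $\tau$ and $u_n(x)=(q(x,a)-q(x,x_n))/(q(x,a)+q(x,x_n))$ for $\sigma$.
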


\begin{proof}
The inclusion follows from $\sigma\subseteq\tau$. For a
$\tau$-continuous function, $\sigma$-continuity only needs to be
checked at $a$, where the sublevel sets of $s$ form a local base. This
gives \eqref{eq:functions}. Constants and uniform limits show that the
subalgebra is unital and closed.

Let $d$ be a compatible metric for $\tau$. The function
\[
  r(x)=\frac{d(x,a)}{d(x,a)+d(x,D)}
\]
is bounded and $\tau$-continuous: its denominator is everywhere
positive since $D$ is closed and $a\notin D$. It satisfies $r(a)=0$
and $r(x_n)=1$, so it is not $\sigma$-continuous. This proves
properness and the failure of weak-$*$ convergence on $\Cb(X,\tau)$.

For $f\in\Cb(X,\sigma)$, convergence $x_n\to a$ gives
$f(x_n)\to f(a)$, proving weak-$*$ convergence on the smaller
algebra. Each evaluation has norm one. If $q$ is a compatible metric
for $\sigma$, the function
\[
  u_n(x)=\frac{q(x,a)-q(x,x_n)}{q(x,a)+q(x,x_n)}
\]
is bounded and $\sigma$-continuous, has norm at most one, and takes
the values $-1$ and $1$ at $a$ and $x_n$, respectively. Hence the
norm of the difference of evaluations is exactly $2$.
\end{proof}

These assertions are standard consequences of point convergence and
continuous separation in metric spaces. Their role here is to describe
the effect of the localized change on bounded continuous functions.

\subsection{A non-Archimedean local field}

\begin{example}\label{ex:localfield}
Let $K$ be a non-Archimedean local field, let
$v:K^\times\to\mathbb Z$ be its normalized discrete valuation, let
$\pi$ be a uniformizer, and let $q$ be the cardinality of its residue
field. Normalize $|\pi|=q^{-1}$, so $|x|=q^{-v(x)}$ for $x\ne0$;
see \cite{Serre}. Use the original complete metric
\[
  d(x,y)=|x-y|,
\]
which need not be bounded. Put $a=0$, $x_n=\pi^{-n}$, and
$\lambda_n=q^{-n}$.

If $m>n$, then
\[
  \pi^{-m}-\pi^{-n}=\pi^{-m}(1-\pi^{m-n}),
  \qquad |\pi^{-m}-\pi^{-n}|=q^m,
\]
because $1-\pi^{m-n}$ is a unit. Thus $D=\{\pi^{-n}:n\in\N\}$
is separated and hence closed discrete. For the functions in
\eqref{eq:main-formula}, the term indexed by $n$ gives
$h(x_n)=q^{-n}$: every other term is greater than $1$. Consequently,
\[
  g(x_n)=\min\{q^n,q^{-n}\}=q^{-n},
  \qquad p(0,\pi^{-n})=q^{-n}.
\]
Theorem~\ref{thm:main-existence} therefore yields a complete metric
$p\le d$ inducing a strict localized coarsening $\sigma$ of the
usual topology $\tau_K$, with
\[
  \pi^{-n}\longrightarrow0\quad\text{in }(K,p).
\]
For every $x\ne0$, the metric $p$ agrees with $|y-z|$ for all $y,z$
in a common neighborhood of $x$. The topology $\sigma$ is Polish,
since $K$ is separable, and has the same Borel sets as $\tau_K$.
Every $f\in\Cb(K,\sigma)$ satisfies $f(\pi^{-n})\to f(0)$.
For $K=\mathbb Q_\ell$, where $\ell$ is prime, one may take
$\pi=\ell$ and $q=\ell$.
\end{example}

\begin{remark}[Group topologies]\label{rem:groups}
The topology in Example~\ref{ex:localfield} is a topology on the
underlying set of $K$. It cannot also be a Hausdorff group topology
for the original addition. More generally, if $G$ is a nontrivial
group and $\tau,\sigma$ are Hausdorff group topologies with
\[
  \tau|_{G\setminus\{e\}}=\sigma|_{G\setminus\{e\}},
\]
then $\tau=\sigma$. Indeed, $G\setminus\{e\}$ is open in both
topologies, so their local topologies agree at every $g\ne e$.
Translation by $g^{-1}$ is a homeomorphism for both topologies and
transports this common local base to $e$. Hence the local topologies
agree everywhere. A strict one-point localized coarsening therefore
cannot preserve a Hausdorff group topology.
\end{remark}

\section*{AI Use Declaration}
The authors declare that GPT-5.6 Sol was used solely for language polishing and editorial assistance. All mathematical ideas, proofs, and results presented in this manuscript are entirely the work of the authors.

\end{document}